\newcommand{\E}{\mathbb{E}}
\newtheorem{theorem}{Theorem}
\newtheorem{lemma}{Lemma}
\newtheorem{proposition}{Proposition}
\newtheorem{corollary}{Corollary}
\title{Finite-time Identification of Stable Linear Systems\\
Optimality of the Least-Squares Estimator
}
\author{Yassir Jedra and Alexandre Proutiere
\thanks{This work was supported by the Wallenberg AI, Autonomous Systems and Software Program (WASP) funded by the Knut and Alice Wallenberg Foundation.}
\thanks{Y. Jedra and A. Proutiere are with the Division of Decision and Control Systems, School of Electrical Engineering and Computer Science, Royal institute of Technology (KTH), Stockholm, Sweden. Emails: \{{\it jedra@kth.se, alepro@kth.se}\}.}
}
\begin{document}

\maketitle
\thispagestyle{empty}
\pagestyle{empty}



\begin{abstract}
We present a new finite-time analysis of the estimation error of the Ordinary Least Squares (OLS) estimator for stable linear time-invariant systems. We characterize the number of observed samples (the length of the observed trajectory) sufficient for the OLS estimator to be $(\varepsilon,\delta)$-PAC, i.e., to yield an estimation error less than $\varepsilon$ with probability at least $1-\delta$. We show that this number matches existing sample complexity lower bounds \cite{simchowitz2018learning,jedra2019sample} up to universal multiplicative factors (independent of ($\varepsilon,\delta)$ and of the system). This paper hence establishes the optimality of the OLS estimator for stable systems, a result conjectured in \cite{simchowitz2018learning}. Our analysis of the performance of the OLS estimator is simpler, sharper, and easier to interpret than existing analyses. It relies on new concentration results for the covariates matrix.  
\end{abstract}


\section{Introduction}

We investigate the canonical problem of identifying Linear Time Invariant (LTI) systems of the form:
\begin{equation}\label{eq:dynamics}
x_0=0 \ \ \ \hbox{ and } \ \ \ \forall t\ge 0,\  x_{t+1} = A x_{t} + \eta_{t+1}.
\end{equation}
$x_t \in \mathbb{R}^d$ denotes the state at time $t$, $A \in \mathbb{R}^{d\times d}$ is initially unknown but stable (i.e., its spectral radius satisfies $\rho(A) < 1$), and $(\eta_t)_{t\ge 1}$ are i.i.d. sub-gaussian zero-mean noise vectors with covariance matrix $I_d$ (for simplicity). The objective is to estimate the matrix $A$ from an observed trajectory of the system. Most work on this topic is concerned with the convergence properties of some specific estimation methods (e.g. ordinary least squares (OLS), ML estimator) \cite{Goodwin:1977:DSI,Ljung:c1,Ljung:c2}. Recently however, there have been intense research efforts towards understanding the finite-time behavior of classical estimation methods \cite{rantzer2018,faradonbeh:2018:c2, simchowitz2018learning, sarkar:2018, oymak:2018, sarkar:2019}. The results therein aim at deriving bounds on the {\it sample complexity} of existing estimation methods (mostly the OLS), namely at finding upper bounds on the number of observations $\tau$ sufficient to identify $A$ with prescribed levels $(\varepsilon,\delta)$ of accuracy and confidence\footnote{Throughout the paper, $\|\cdot\|$ denotes the euclidian norm for vectors, and the operator norm for matrices.}: $\mathbb{P}(\|A_t-A\|\le \varepsilon)\ge 1-\delta$ for any $t\ge \tau$ if $A_t$ denotes the estimator of $A$ after $t$ observations. Lower bounds of the sample complexity (valid for {\it any} estimator) have been also derived in \cite{simchowitz2018learning,jedra2019sample}. Sample complexity upper bounds appearing in the aforementioned papers are often hard to interpret and to compare. The main difficulty behind deriving such bounds stems from the fact that the data used to estimate $A$ is correlated (we observe a single trajectory of the system). In turn, existing analyses rely on involved concentration results for random matrices \cite{mendelson:2014, vershynin:2012} and self-normalized processes \cite{pena2008self}. The most technical and often tedious part of these analyses deals with deriving concentration results for the covariates matrix $X$ defined as $X^\top=(x_1 ,\ldots, x_t)$, and the tightness of these results directly impacts that of the sample complexity upper bound (refer to \textsection \ref{sec:related} for more details).

In this paper, we present a novel analysis of the error $\|A_t-A\|$ of the OLS estimator. In this analysis, we derive tight concentration results for the entire spectrum of the covariates matrix $X$. To this aim, we first show that the spectrum of $X$ can be controlled when
 $\| (XM)^\top XM - I_d \|$ is upper bounded, where $M=\left(\sum_{s=0}^{t-1} \Gamma_s(A)\right)^{-\frac{1}{2}}$ and $\Gamma_s(A)=\sum_{k=0}^s A^k(A^k)^\top$ is the finite-time controllability gramian of the system. We then derive a concentration inequality for $\| (XM)^\top XM - I_d \|$ by (i) expressing this quantity as the supremum of a {\it chaos process} \cite{krahmer2014suprema}; and (ii) applying Hanson-Wright inequality \cite{rudelson2013hanson} and an $\epsilon$-net argument to upper bound this supremum.

Our main result is simple and easy to interpret. For any $(\varepsilon, \delta)>0$, we establish that the OLS estimator is $(\varepsilon, \delta)$-PAC after $t$ observations, i.e., $\mathbb{P}(\|A_t-A\|\le \varepsilon)\ge 1-\delta$, provided that:
\begin{equation}\label{eq:sc}
\lambda_{\min}(\sum_{s=0}^{t-1} \Gamma_s(A)) \ge c\max\{{1\over \varepsilon^2},\mathcal{J}(A)^2\} (\log(1/\delta) + d),
\end{equation}
where $\lambda_{\min}(W)$ denotes the smallest eigenvalue of $W$, $c$ is a universal constant, and $\mathcal{J}(A)=\sum_{s\ge 0}\|A^s\|$ depends on $A$ only and is finite when $\rho(A)<1$ (${\cal J}(A)\le 1/(1-\|A\|)$ if $\|A\|<1$).

In \cite{simchowitz2018learning}, the authors have shown that an estimator can be $(\varepsilon,\delta)$-PAC after $t$ observations uniformly over the set ${\cal L}=\{A : \exists O\in \mathbb{R}^{d\times d}, A=\rho O \hbox{ and } O^\top O=I_d \}$ of scaled-orthogonal matrices for some fixed $\rho > 0$ only if the following necessary condition holds: for $A\in {\cal L}$,
\begin{equation}\label{eq:need}
\lambda_{\min}(\sum_{s=0}^{t-1} \Gamma_s(A)) \ge c{1 \over \varepsilon^2}(\log(1/\delta)+d),
\end{equation}
for some universal constant $c>0$. Hence our sufficient condition (\ref{eq:sc}) cannot be improved when the accuracy level $\varepsilon$ is sufficiently low, i.e., when $\varepsilon=O({\cal J}(A)^{-1})$. This result was actually conjectured in \cite{simchowitz2018learning}. More recently in \cite{jedra2019sample}, we also proved that for any arbitrary $A$, a necessary condition for the existence of a $(\varepsilon, \delta)$-PAC estimator after $t$ observations is $\lambda_{\min}(\sum_{s=0}^{t-1} \Gamma_s(A)) \ge c{1 \over \varepsilon^2}\log(1/\delta)$, but we believe that this sample complexity lower bound can be improved to \eqref{eq:need} (for arbitrary matrix $A$, not only for orthogonal matrices). Anyway, when $(\varepsilon, \delta)$ approach 0, and more precisely when $\varepsilon=O({\cal J}(A)^{-1})$ and $\delta=O(e^{-d})$, the necessary condition derived in \cite{jedra2019sample} and the sufficient condition \eqref{eq:sc} are identical. They would be also identical for any $\delta>0$, if we manage to show that the sample complexity lower bound \eqref{eq:need} holds for any matrix.

An other way of presenting our results is to consider the rate at which the estimation error decays with the number of observations $t$. We obtain: with probability at least $1-\delta$,
\begin{equation}\label{eq:rate}
\|A_t-A\| \le c   \sqrt{\log(1/\delta) + d\over \lambda_{\min}(\sum_{s=0}^{t-1} \Gamma_s(A))}.
\end{equation}
One can readily check that $\lambda_{\min}(\sum_{s=0}^{t-1} \Gamma_s(A))\ge t$. Hence, $\|A_t-A\|$ decays as $\sqrt{\log(1/\delta)+d\over t}$, which is the best possible decay rate. In \textsection \ref{sec:related}, we compare our result to those of existing analyses of the OLS estimator. Our result is tighter than state-of-the-art results, and it is derived using much simpler arguments.

\medskip
\noindent
{\bf Notations.} Throughout the paper, $\|W\|$ denotes the operator norm of the matrix $W$, and the Euclidian norm of a vector $w$ is denoted either by $\| w\|$ or $\| w\|_2$. The unit sphere in $\mathbb{R}^d$ is denoted by $S^{d-1}$. The singular values of any real matrix $W \in \mathbb{R}^{m \times d}$ with $m \ge d$ are denoted by $s_k(W)$ for $k =1, \dots, d$, arranged in a non-increasing order. For any square matrix $W$, $W^\dagger$ represents its Moore-Penrose pseudo-inverse. The vectorization operation $\textrm{vec}(\cdot)$ acts on matrices by stacking its columns into one long column vector.
Next, we recall the definition of sub-gaussian random variables and vectors. The $\psi_2$-norm of a r.v. $X \in \mathbb{R}$ is defined as $\Vert X \Vert_{\psi_2} = \inf\{ K\ge 0: \mathbb{E}[\exp(X^2/K^2)] \le 2 \}$. $X$ is said {\it sub-gaussian} if $\Vert X \Vert_{\psi_2}<\infty$. Now a random vector $X \in \mathbb{R}^d$ is sub-gaussian if its one-dimensional marginals $X^\top x $ are sub-gaussian r.v.'s for all $x \in \mathbb{R}^d$. Its $\psi_2$-norm is then defined as
$
\Vert X \Vert_{\psi_2} = \sup_{x \in S^{d-1}} \Vert X^\top x \Vert_{\psi_2}.
$
Note that a gaussian vector with covariance matrix equal to the identity is sub-gaussian, and its $\psi_2$-norm is 1. A vector $X\in \mathbb{R}^d$ is {\it isotropic} if
$
\E[ (X^\top x)^2] = \Vert x \Vert^2 ,
$
for all $x \in \mathbb{R}^d$ or equivalently if $\E XX^\top =I_d$.

Finally, we introduce notations specifically related to our linear system. Let $X$ and $E$ be the matrices defined by $X^\top=(x_1 ,\ldots, x_t)$ and $E^\top=(\eta_2,\ldots,\eta_{t+1})$. We further define the noise vector $\xi$ by $\xi^\top=(\eta_2^\top, \ldots, \eta_{t+1}^\top)$.


\section{Related work}\label{sec:related}

The finite-time analysis of the OLS estimator has received a lot of attention recently, see \cite{faradonbeh2018finite}, \cite{simchowitz2018learning}, \cite{sarkar2019near} and references therein.

In \cite{faradonbeh2018finite}, the authors prove that the OLS estimator is $(\varepsilon,\delta)$-PAC if the observed trajectory is longer than $\frac{1}{\varepsilon^2}\log\left(\frac{1}{\delta}\right)^{3} c_1(A) d\log(d)$. In this upper bound, the constant $c_1(A)$ depends on $A$ in a complicated manner. The bound does not exhibit the right scaling in $\log(1/\delta)$ and $\varepsilon$. It also has a worse scaling with the dimension $d$ than our bound.

The main result in \cite{simchowitz2018learning} (Theorem 2.1) states that the OLS estimator is $(\varepsilon,\delta)$-PAC after $t$ observations under the following condition:
$$
t\ge {1\over \varepsilon^2\lambda_{\min}(\Gamma_k(A))}(d\log(d/\delta)+\log\det(\Gamma_t(A)\Gamma_k(A)^{-1})),
$$
for some $k$ satisfying
$$
{t\over k}\ge c' (d\log(d/\delta)+\log\det(\Gamma_t(A)\Gamma_k(A)^{-1})).
$$
This result is difficult to interpret, and choosing $k$ to optimize the bound seems involved. The authors of \cite{simchowitz2018learning} present a simplified result in Corollary 2.2, removing the dependence in $k$. However, the result requires that $t\ge T_0$, and we can show that $T_0$ actually depends on $A$ (the authors do not express this dependence). Corollary A.2 presented in the appendix of \cite{simchowitz2018learning} is more explicit, and states that OLS is $(\varepsilon,\delta)$-PAC if
$$
t \ge c( d \log\left( d \textrm{cond}(S) t/\delta\right) + \sum_{l}b_l^2 \log t ),
$$
where $b_1, b_2, \dots$ are the block sizes of the Jordan decomposition of $A$, and $S$ is the diagonalizing matrix in this decomposition. Note that the term $\sum_{l} b_l^2$ may be in the worst case of the order $d^2$. Compared to our sample complexity upper bound \eqref{eq:sc}, the above bound has a worse dependence in the dimension $d$. It has the advantage not to have the term ${\cal J}(A)$, but this advantage disappears when $\varepsilon \le c'\mathcal{J}(A)^{-1}$. The analysis of  \cite{simchowitz2018learning} relies on the decomposition of the estimation error $\|A_t-A\|\le \| X^\dagger\| \| U^\top E\|$, where $U$ is obtained from the singular value decomposition $X = U \Sigma V^\top$. Deriving an upper bound of $\| X^\dagger\|$ is then the most involved step of the analysis. To this aim, the authors adapt the so-called small ball method \cite{mendelson2014learning} (which typically requires independence), and introduce the Block Martingale Small Ball condition (BMSB) (which indeed requires the introduction of the term $k$ in the result).

The authors of \cite{sarkar2019near} use the same decomposition of $\|A_t-A\|$ as ours. This decomposition is $\Vert A_{t} - A \Vert \le \Vert E^\top X ((X^\top X)^\dagger)^\frac{1}{2}  \Vert \Vert ((X^\top X)^\dagger)^\frac{1}{2} \Vert$. The first term corresponds to a self-normalized process, and can be analyzed using the related theory \cite{pena2008self}. The analysis of the second term again requires to control the singular values of $X$. In turn, the analysis of $s_d(X)$ presented in \cite{sarkar2019near} is involved, and unfortunately leads to bounds that are not precise in the system parameters $A$ and the dimension $d$. Overall, the upper bound of the sample complexity proposed in \cite{sarkar2019near} is $O(C(d){1\over \varepsilon^2}\log(1/\delta))$. The dependence in $d$ is not explicit, and that in $A$ is unknown and hidden in constants.


\section{Main result}

Consider the linear system (\ref{eq:dynamics}), and assume that the noise vectors are i.i.d. isotropic vectors with independent coordinates of $\psi_2$-norm upper bounded by $K$. We observe a single trajectory of the system $(x_1,\ldots,x_{t+1})$, and builds from these observations the OLS estimator: $A_{t} = \underset{A \in \mathbb{R}^{d\times d}}{\arg\min} \sum_{s=0}^{t} \Vert x_{s+1} - A x_s \Vert^2$. $A_t$ enjoys the following closed form expression:
\begin{equation}
  A_{t}  = \left( \sum_{s=0}^{t} x_{s+1} x_s^\top  \right) \left( \sum_{s=0}^t x_s x_s^\top \right)^{\dagger}.
\end{equation}
The estimation error is also explicit:
\begin{equation}\label{eq:estimation error}
      A_{t}-A  = \left( \sum_{s=1}^t \eta_{s+1} x_s^\top  \right) \left( \sum_{s=1}^t x_s x_s^\top \right)^{\dagger},
\end{equation}
which we can re-write as (using the notation introduced in the introduction)
\begin{equation}\label{eq:estimation error matrix form}
    A_{t} - A = E^\top X (X^\top X)^\dagger.
\end{equation}

Before stating our main result, we define a quantity that will arise naturally in our analysis. We define the truncated block Toeplitz matrix $\Gamma$ as
\begin{equation}
  \Gamma = \begin{bmatrix}
  I_d\phantom{^{t-1}} &        &        &     &    \\
  A\phantom{^{t-1}} & I_d      &        &  O  &    \\
                    &        & \ddots &     &    \\
  A^{t-2}           & \hdots &   A    &  I_d  &    \\
  A^{t-1}           & \hdots & \hdots &  A  & I_d  \\
  \end{bmatrix}.
\end{equation}
When $\rho(A)<1$, we can upper bound $\Vert \Gamma \Vert$ by a quantity ${\cal J}(A)$, independent of $t$, as shown in Lemma \ref{lem4} presented in the appendix:
\begin{equation}\label{eq:toeplitz}
    \Vert \Gamma \Vert \le {\cal J}(A)=\sum_{s\ge 0} \|A^s\|.
\end{equation}

\medskip
\noindent
Our main result is the following Theorem.
\medskip

\begin{theorem}[OLS performance]
Let $A_t$ denote the OLS estimator. For any $0<\delta<1$, and any $\varepsilon >0 $, we have:
$
\mathbb{P}\left( \Vert A_{t} - A \Vert > \varepsilon \right) < \delta,
$
as long as the following condition holds
\begin{equation}\label{eq:sample complexity upper bound}
  \lambda_{\min}\left(\sum_{s=0}^{t-1} \Gamma_s (A)\right) \ge  c \max\left\{{1\over \varepsilon^2},\|\Gamma \|^2\right\} \left(\log ( \frac{1}{\delta}) + d \right),
\end{equation}
where $c=c'K^4$ and $c'>0$ is a universal constant.
\label{th1}
\end{theorem}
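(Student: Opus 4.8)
The plan is to split the error bound into two factors, following the decomposition $A_t - A = E^\top X (X^\top X)^\dagger$ from \eqref{eq:estimation error matrix form}, namely
\[
\Vert A_t - A \Vert \;\le\; \underbrace{\bigl\Vert E^\top X \bigl((X^\top X)^\dagger\bigr)^{1/2}\bigr\Vert}_{\text{self-normalized term}} \cdot \underbrace{\bigl\Vert \bigl((X^\top X)^\dagger\bigr)^{1/2}\bigr\Vert}_{= \, s_d(X)^{-1}},
\]
and to control each factor on a high-probability event where the spectrum of $X$ is pinned down. First I would introduce the whitening matrix $M = \bigl(\sum_{s=0}^{t-1}\Gamma_s(A)\bigr)^{-1/2}$ and argue — this should be a self-contained linear-algebra lemma, presumably the one alluded to after \eqref{eq:toeplitz} — that if $\Vert (XM)^\top (XM) - I_d \Vert \le 1/2$ then the singular values of $X$ are all comparable to those of $M^{-1}$; in particular $s_d(X)^2 \ge \tfrac12 \lambda_{\min}\!\bigl(\sum_{s=0}^{t-1}\Gamma_s(A)\bigr)$. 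The key probabilistic input is then a concentration inequality for $\Vert (XM)^\top XM - I_d\Vert$: writing $X = \mathrm{vec}$-style as a linear image of the stacked noise $\xi$ via the block-Toeplitz matrix $\Gamma$ (so that $x_s$ is a linear combination of $\eta_2,\dots,\eta_{s+1}$), the quantity $\Vert (XM)^\top XM - I_d\Vert = \sup_{u\in S^{d-1}} \bigl| \Vert XMu\Vert^2 - 1\bigr|$ becomes the supremum of a Gaussian-like chaos in $\xi$, whose coefficient matrices have operator norm controlled by $\Vert \Gamma\Vert \le \mathcal{J}(A)$ times $\Vert M\Vert$. Applying the Hanson–Wright inequality pointwise and an $\epsilon$-net over $S^{d-1}$ (of cardinality $e^{O(d)}$) yields
\[
\mathbb{P}\Bigl( \Vert (XM)^\top XM - I_d\Vert > 1/2 \Bigr) \;\le\; \delta/2
\]
provided $\lambda_{\min}\!\bigl(\sum_{s=0}^{t-1}\Gamma_s(A)\bigr) \ge c_1 K^4 \Vert\Gamma\Vert^2 (\log(1/\delta) + d)$, which is exactly the $\Vert\Gamma\Vert^2$ branch of \eqref{eq:sample complexity upper bound}.

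For the self-normalized factor, I would invoke the standard self-normalized martingale bound (of Peña–Lai–Shao / Abbasi-Yadkori type, as used in \cite{sarkar2019near}): conditionally, $\bigl\Vert E^\top X \bigl((X^\top X)^\dagger\bigr)^{1/2}\bigr\Vert$ is $O\!\bigl(K\sqrt{d + \log(1/\delta)}\bigr)$ with probability $\ge 1 - \delta/2$, uniformly in the realization of $X$ — here I would either use a deflated version that avoids the usual $\log\det$ factor (feasible because the net argument already controls the spectrum of $X$, so $\log\det\bigl(I + \text{something}\bigr)$ is itself $O(d)$ on the good event), or simply absorb a $\log(\text{cond})$ term using the two-sided control of $s_1(X), s_d(X)$ from the first lemma. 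Intersecting the two events and combining,
\[
\Vert A_t - A\Vert \;\le\; \frac{c_2 K\sqrt{d + \log(1/\delta)}}{\sqrt{\tfrac12\,\lambda_{\min}\!\bigl(\sum_{s=0}^{t-1}\Gamma_s(A)\bigr)}} \;\le\; \varepsilon
\]
whenever $\lambda_{\min}\!\bigl(\sum_{s=0}^{t-1}\Gamma_s(A)\bigr) \ge c_3 K^2 \varepsilon^{-2}(d + \log(1/\delta))$, which is the $1/\varepsilon^2$ branch; taking the maximum of the two requirements and setting $c = c' K^4$ gives \eqref{eq:sample complexity upper bound}. (The $K^4$ rather than $K^2$ comes from the chaos concentration, where the noise enters quadratically.)

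The main obstacle I anticipate is the chaos-process bound for $\Vert (XM)^\top XM - I_d\Vert$: one must keep careful track of how $\Vert\Gamma\Vert$ and $\Vert M\Vert$ enter the Hanson–Wright deviation parameters (the "Frobenius" and "operator" norms of the relevant coefficient matrix built from $\Gamma$ and $M$), and verify that after the union bound over the $\epsilon$-net these combine to give precisely the clean threshold $\lambda_{\min}\!\bigl(\sum_{s}\Gamma_s(A)\bigr) \gtrsim \Vert\Gamma\Vert^2(d+\log(1/\delta))$ with no stray $\log t$ or $\mathrm{cond}$ factors — this is exactly where previous analyses lost tightness, and getting it clean is the crux. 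A secondary, more routine, point is choosing the net resolution and the constant $1/2$ so that the net-discretization error, the Hanson–Wright tail, and the eigenvalue-comparison lemma all fit together with universal constants.
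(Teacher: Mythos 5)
Your proposal is correct and follows essentially the same route as the paper: the decomposition $\Vert A_t - A\Vert \le \Vert E^\top X ((X^\top X)^\dagger)^{1/2}\Vert\,\Vert ((X^\top X)^\dagger)^{1/2}\Vert$, the whitening by $M=(\sum_{s=0}^{t-1}\Gamma_s(A))^{-1/2}$ with the approximate-isometry lemma at threshold $1/2$, the chaos-process/Hanson--Wright/$\epsilon$-net bound yielding the $\Vert\Gamma\Vert^2$ branch, and the self-normalized martingale bound with the $\log\det$ factor reduced to $O(d)$ on the good event (the paper uses $X^\top X + S \preceq 4S$ there), giving the $1/\varepsilon^2$ branch. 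No substantive differences to report.
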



\section{Spectrum of the Covariates Matrix}

In this section, we analyze the spectrum of the covariates matrix $X$. Such an analysis is at the core of proof of Theorem \ref{th1}. Indeed, relating $s_d(X)$ to the spectrum of the matrix $M=\left(\sum_{s=0}^{t-1} \Gamma_s(A)\right)^{-\frac{1}{2}}$ is close to what Theorem \ref{th1} states. We are actually able to control the entire spectrum of $X$ with high probability, as stated in Theorem \ref{thm:spectrum deviations} below. We believe that this result is of independent interest; but it is not directly used in the poof of Theorem \ref{th1}. The latter relies on Lemma \ref{lem2}, which is also the main ingredient of the proof of Theorem \ref{thm:spectrum deviations}.

\begin{theorem}\label{thm:spectrum deviations}
Let $\varepsilon > 0$. Let $M=\left(\sum_{s=0}^{t-1} \Gamma_s(A)\right)^{-\frac{1}{2}}$. Then:
$$\frac{1}{\Vert M \Vert}(1 - K^2\varepsilon) \le s_d(X) \le \dots \le s_1(X) \le (1 + K^2\varepsilon)  \frac{1}{s_d(M)}
$$
holds with probability at least
$$
1 - 2\exp \left(- c_1 \varepsilon^2 \frac{1}{\Vert M \Vert^2 \Vert \Gamma \Vert^2 } + c_2 d\right),
$$
for some universal constants $c_1, c_2>0$.
\end{theorem}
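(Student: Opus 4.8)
The plan is to reduce the bound on the singular values of $X$ to a bound on $\|(XM)^\top XM - I_d\|$, and then to obtain the latter via a chaos-process concentration inequality. First, observe that $X = E^\top \Gamma^\top$ is not quite right; rather, unrolling the dynamics \eqref{eq:dynamics} gives each $x_s$ as a linear combination $\sum_{k=1}^s A^{s-k}\eta_{k+1}$ of the noise vectors, so in matrix form $X = \tilde{\Gamma} \, \tilde{E}$ where $\tilde{\Gamma}$ is the strictly-lower block-triangular Toeplitz matrix built from powers of $A$ and $\tilde E$ is the (shifted) stacked-noise matrix; the point is that $\E[X^\top X] = \sum_{s=0}^{t-1}\Gamma_s(A) = M^{-2}$ because the noise is isotropic with identity covariance. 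Consequently $\E[(XM)^\top XM] = I_d$, and $XM$ is a ``whitened'' covariates matrix. The singular values of $X$ and of $XM$ are related through the singular values of $M$: since $X = (XM)M^{-1}$, we have $s_d(X) \ge s_d(XM)\, s_d(M^{-1}) = s_d(XM)/\|M\|$ and $s_1(X) \le s_1(XM)\,\|M^{-1}\| = s_1(XM)/s_d(M)$. So it suffices to show $1 - K^2\varepsilon \le s_d(XM) \le s_1(XM) \le 1 + K^2\varepsilon$, which by Weyl/the standard $\|W^\top W - I\| \le \eta \Rightarrow 1-\eta \le s_d(W) \le s_1(W) \le 1+\eta$ argument (here $\eta = K^2\varepsilon$, possibly with a harmless squaring, i.e. $\|W^\top W - I\|\le \eta$ gives $\sqrt{1-\eta}\le s_d, s_1 \le \sqrt{1+\eta}$, then $\sqrt{1\pm\eta}$ bounded by $1\pm\eta$) follows once we prove
\[
\mathbb{P}\!\left(\|(XM)^\top XM - I_d\| > K^2\varepsilon\right) \le 2\exp\!\left(-c_1 \varepsilon^2 \frac{1}{\|M\|^2\|\Gamma\|^2} + c_2 d\right).
\]

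The core is this concentration inequality, which I expect is exactly the content of the Lemma~2 the authors refer to. To prove it, write $(XM)^\top XM - I_d$ in terms of the noise vector $\xi$ (the stacked $\eta$'s): $X M = \mathcal{A}\,\xi$ reshaped appropriately, so that $\|(XM)^\top XM - I_d\| = \sup_{u\in S^{d-1}} \big| \|\mathcal{A}_u \xi\|^2 - \E\|\mathcal{A}_u\xi\|^2 \big|$ where $\mathcal{A}_u$ is a matrix (linear in $u$, built from $M$, $u$, and the Toeplitz structure $\tilde\Gamma$) acting on $\xi$. This is precisely a \emph{suprema of chaos process} $\sup_{u} |\,\|\mathcal{A}_u\xi\|^2 - \E\|\mathcal{A}_u\xi\|^2\,|$. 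For a fixed $u$, the Hanson–Wright inequality \cite{rudelson2013hanson} gives a sub-exponential tail of the form $2\exp(-c\min\{t_0^2/(K^4\|\mathcal{A}_u^\top\mathcal{A}_u\|_F^2), t_0/(K^2\|\mathcal{A}_u\|^2)\})$. One checks that $\|\mathcal{A}_u\| \le \|\Gamma\|\,\|M\|$ uniformly in $u$ (this is where \eqref{eq:toeplitz}, $\|\tilde\Gamma\|\le \|\Gamma\| \le \mathcal{J}(A)$, enters) and $\|\mathcal{A}_u\|_F^2 \le d\,\|\Gamma\|^2\|M\|^2$ or similar; plugging in $t_0 = K^2\varepsilon$ with $\varepsilon$ in the appropriate regime yields the exponent $-c_1\varepsilon^2/(\|M\|^2\|\Gamma\|^2)$ for the pointwise bound. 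To pass from a fixed $u$ to the supremum over $S^{d-1}$, use a standard $\frac{1}{4}$-net $\mathcal{N}$ of $S^{d-1}$ (cardinality $\le 9^d$), a union bound over $\mathcal{N}$ — this produces the additive $+c_2 d$ term in the exponent, since $\log|\mathcal{N}| \lesssim d$ — and a routine approximation argument (the map $u\mapsto \|\mathcal{A}_u\xi\|^2 - \E[\cdot]$ is quadratic in $u$, so sup over $S^{d-1}$ is controlled by sup over the net up to a constant factor, which gets absorbed into $c_1$).

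The main obstacle — and the step I would spend the most care on — is the reduction to a chaos process with the \emph{right} operator-norm and Frobenius-norm bounds on $\mathcal{A}_u$: one must set up the vectorization so that (a) the matrix $\mathcal{A}_u$ genuinely factors through the Toeplitz matrix $\tilde\Gamma$ and the normalizer $M$, so that $\|\mathcal{A}_u\|$ inherits the bound $\|\Gamma\|\|M\|$, and (b) the dependence on $u$ is clean enough that a net argument of size $e^{O(d)}$ suffices. Getting the dimensional factor in the Frobenius-norm bound to be exactly $d$ (not $td$ or $d^2$) is delicate: it relies on the fact that, after normalizing by $M$, the ``effective'' covariance contracting along direction $u$ has trace $O(1)$ rather than $O(t)$, which is the essential structural gain over naive bounds. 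Everything else — the Weyl-type passage between $\|W^\top W - I\|$ and singular values, and the passage between singular values of $X$ and $XM$ — is routine linear algebra. I also expect a mild technical restriction such as $K^2\varepsilon < 1$ (or the bound being vacuous otherwise) to be implicit in the statement.
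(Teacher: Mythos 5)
Your proposal follows essentially the same route as the paper: an approximate-isometry reduction ($\Vert (XM)^\top XM - I_d\Vert \le \eta$ controls the spectrum of $X$ via $s_d(M^{-1})$ and $s_1(M^{-1})$), followed by writing $\Vert (XM)^\top XM - I_d\Vert$ as the supremum of a chaos process, Hanson--Wright pointwise, and a $1/4$-net union bound giving the $+c_2 d$ term. The one point you flag as delicate resolves cleanly: since $\E\Vert XMu\Vert_2^2 = u^\top M(\E X^\top X)Mu = 1$ for $u \in S^{d-1}$ by the very definition of $M$, the Frobenius norm of the relevant matrix is exactly $1$ (not $O(d)\Vert\Gamma\Vert^2\Vert M\Vert^2$), and combined with the operator-norm bound $\Vert M\Vert\,\Vert\Gamma\Vert$ this yields precisely the claimed exponent.
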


Theorem \ref{thm:spectrum deviations} is a direct consequence of the following two lemmas. Lemma \ref{lem:approximate isometry} is a corollary of \cite[Ch. 4, Lemma 4.5.6]{vershynin_2018}), and is proved in the appendix. Informally, in this lemma, we may think of $X$ as a tall random matrix and $M^{-1}$ as a deterministic normalizing matrix. If the latter is chosen appropriately, then one would hope that $XM$ is an approximate isometry\footnote{ A mapping $T: \mathcal{X} \to \mathcal{Y}$ is said to be an isometry if $d_{\mathcal{X}}(x, y) = d_{\mathcal{Y}}(Tx, Ty)$ for all $x, y \in \mathcal{X}$.} in the sense of Lemma \ref{lem:approximate isometry}, which in turn will provide a two-sided bound on the spectrum of $X$.

\begin{lemma}[Approximate isometries]\label{lem:approximate isometry}
Let $X\in \mathbb{R}^{t\times d}$, and let $M\in \mathbb{R}^{d\times d}$ be a full rank matrix. Let $\varepsilon>0$ and assume that
\begin{equation}\label{eq:approximate isometry}
    \Vert (XM)^\top XM - I_d \Vert \le \max(\varepsilon, \varepsilon^2).
\end{equation}
Then the following holds
$$
\frac{1}{s_1(M)}(1 - \varepsilon) \le s_d(X) \le \dots \le s_1(X) \le (1 + \varepsilon)  \frac{1}{s_d(M)}.
$$
\end{lemma}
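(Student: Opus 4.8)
The plan is to reduce the statement to an eigenvalue comparison between the positive semidefinite matrices $X^\top X$ and $(XM)^\top XM$, using crucially that $M$, being a square matrix of full rank, is invertible (so $M^{-1}$ exists and $s_d(M)>0$).

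\textbf{Step 1: turn the hypothesis into a Loewner sandwich for $(XM)^\top XM$.} Set $Q:=(XM)^\top XM\succeq 0$. The bound \eqref{eq:approximate isometry}, namely $\Vert Q-I_d\Vert\le\max(\varepsilon,\varepsilon^2)$, says exactly that every eigenvalue of $Q$ lies in the interval $[\,1-\max(\varepsilon,\varepsilon^2),\,1+\max(\varepsilon,\varepsilon^2)\,]$. I would then record two elementary numeric facts: $1+\max(\varepsilon,\varepsilon^2)\le(1+\varepsilon)^2$ for every $\varepsilon>0$, and $1-\max(\varepsilon,\varepsilon^2)\ge(1-\varepsilon)^2$ whenever $\varepsilon\le 1$ (both reduce to $\varepsilon^2\le\varepsilon$ or $\varepsilon^2\ge\varepsilon$ in the appropriate range). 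Hence $(1-\varepsilon)^2 I_d\preceq Q\preceq(1+\varepsilon)^2 I_d$ when $\varepsilon\le 1$, and $0\preceq Q\preceq(1+\varepsilon)^2 I_d$ when $\varepsilon>1$ — the lower bound being vacuous in the latter regime, which is consistent with the conclusion of the lemma where $1-\varepsilon$ may be nonpositive.

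\textbf{Step 2: conjugate by $M^{-1}$ and apply monotonicity.} Since $X=(XM)M^{-1}$, we have $X^\top X=M^{-\top}QM^{-1}$, and the map $B\mapsto M^{-\top}BM^{-1}$ preserves the Loewner order. Applying it to the sandwich of Step 1 gives $(1-\varepsilon)^2\,M^{-\top}M^{-1}\preceq X^\top X\preceq(1+\varepsilon)^2\,M^{-\top}M^{-1}$ (lower bound only for $\varepsilon\le 1$). Weyl's monotonicity theorem then yields, for each $k=1,\dots,d$, the inequalities $(1-\varepsilon)^2\lambda_k(M^{-\top}M^{-1})\le\lambda_k(X^\top X)\le(1+\varepsilon)^2\lambda_k(M^{-\top}M^{-1})$.

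\textbf{Step 3: return to singular values.} Using $\lambda_k(X^\top X)=s_k(X)^2$ and $\lambda_k(M^{-\top}M^{-1})=s_k(M^{-1})^2$, together with $s_1(M^{-1})=1/s_d(M)$ and $s_d(M^{-1})=1/s_1(M)$, I obtain $s_k(X)\le(1+\varepsilon)/s_d(M)$ for all $k$ (in particular for $k=1$) and $s_k(X)\ge(1-\varepsilon)/s_1(M)$ for all $k$ (in particular for $k=d$); taking square roots is legitimate as all quantities are nonnegative, and for $\varepsilon>1$ the lower bound holds trivially. Combined with the ordering convention $s_d(X)\le\dots\le s_1(X)$, this is precisely the claimed two-sided bound. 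There is no genuinely hard step here — the argument is pure finite-dimensional linear algebra — and the only point needing care is the bookkeeping around $\max(\varepsilon,\varepsilon^2)$ and the degenerate case $\varepsilon\ge1$; indeed this is exactly why the hypothesis is stated with $\max(\varepsilon,\varepsilon^2)$ rather than $\varepsilon$, since that is what absorbs the loss between $\sqrt{1\pm u}$ and $1\pm\varepsilon$.
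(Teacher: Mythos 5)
Your argument is correct, and it reaches the same two-sided spectral bound as the paper but by a different transfer mechanism. Both proofs begin by placing the spectrum of $Q=(XM)^\top XM$ in $[(1-\varepsilon)^2,(1+\varepsilon)^2]$ — the paper does this via the variational form $\sup_{u\in S^{d-1}}\bigl\vert\,\Vert XMu\Vert_2^2-1\bigr\vert$ together with the scalar inequality $\vert z^2-1\vert\ge\max(\vert z-1\vert,\vert z-1\vert^2)$ and monotonicity of $z\mapsto\max(z,z^2)$, while you use the equivalent numeric facts $1+\max(\varepsilon,\varepsilon^2)\le(1+\varepsilon)^2$ and $1-\max(\varepsilon,\varepsilon^2)\ge(1-\varepsilon)^2$ (for $\varepsilon\le1$), both of which check out. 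The divergence is in how the bound is carried from $XM$ to $X$: the paper stays with singular values and invokes the product inequalities $s_d(XM)\le s_d(X)\,s_1(M)$ and $s_1(X)\,s_d(M)\le s_1(XM)$, which only touch the extreme indices; you instead write $X^\top X=M^{-\top}QM^{-1}$, push the Loewner sandwich through the congruence, and apply Weyl monotonicity, which yields the slightly stronger per-index bounds $(1-\varepsilon)^2\lambda_k(M^{-\top}M^{-1})\le\lambda_k(X^\top X)\le(1+\varepsilon)^2\lambda_k(M^{-\top}M^{-1})$ for every $k$, not just $k\in\{1,d\}$. Your version also makes the degenerate regime $\varepsilon>1$ explicit, which the paper handles only implicitly. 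Both routes are elementary and of comparable length; yours leans on invertibility of $M$ a little more directly, but that is part of the hypothesis, so nothing is lost.
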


\medskip
Lemma \ref{lem2} is the main ingredient of the proof of Theorem \ref{thm:spectrum deviations}, but also of that of Theorem \ref{th1}. The lemma is established by expressing $\Vert (XM)^\top XM - I_d \Vert$ as the supremum of a {\it chaos process}, which we can control combining Hanson-Wright inequality and a classical $\epsilon$-net argument. In the next two subsections, we formally introduce chaos processes and provide related concentration inequalities; we also present the $\epsilon$-net argument used to complete the proof of Lemma \ref{lem2}.

\begin{lemma}\label{lem2}
Let $M=\left(\sum_{s=0}^{t-1} \Gamma_s(A)\right)^{-\frac{1}{2}}$.
$$
\Vert (XM)^\top XM - I_d \Vert > \max(\varepsilon, \varepsilon^2) K^2
$$
holds with probability at most
$$
2 \exp \left(- c_1 \varepsilon^2 \frac{1}{\Vert M \Vert^2 \Vert \Gamma \Vert^2 } + c_2 d\right)
$$
for some positive absolute constants $c_1, c_2$.
\end{lemma}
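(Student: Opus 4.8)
The plan is to rewrite the random quantity $\Vert (XM)^\top XM - I_d\Vert$ as the supremum over the unit sphere $S^{d-1}$ of a quadratic form in the i.i.d. noise vector $\xi$, and then to control that supremum with the Hanson–Wright inequality combined with a standard $\epsilon$-net discretization. First I would express the covariates matrix in terms of the noise: from the dynamics \eqref{eq:dynamics}, each $x_s$ is a linear combination of $\eta_1,\dots,\eta_s$, so stacking gives $X = \Gamma_{\!\text{rows}}\,\xi$ in vectorized form, where the block-Toeplitz structure of $\Gamma$ encodes the powers of $A$; concretely $\textrm{vec}(X^\top) = \Gamma\,\xi$ up to the obvious identification. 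Consequently, for a fixed unit vector $u\in S^{d-1}$, the scalar $u^\top\big((XM)^\top XM\big)u = \Vert XMu\Vert^2$ is a quadratic form $\xi^\top B_u^\top B_u\, \xi$ for an explicit matrix $B_u$ built from $\Gamma$, $M$ and $u$. The key algebraic fact, using the definition $M = \big(\sum_{s=0}^{t-1}\Gamma_s(A)\big)^{-1/2}$, is that $\mathbb{E}[\Vert XMu\Vert^2] = u^\top M^\top\big(\sum_{s=0}^{t-1}\Gamma_s(A)\big)M\, u = \Vert u\Vert^2 = 1$; this is exactly the choice of $M$ that makes $XM$ isotropic in expectation, so $u^\top\big((XM)^\top XM - I_d\big)u$ is a centered chaos.

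Next I would apply the Hanson–Wright inequality \cite{rudelson2013hanson} to each centered quadratic form $\xi^\top(B_u^\top B_u - \mathbb{E}[B_u^\top B_u])\xi$. This yields, for fixed $u$, a sub-exponential tail of the form $2\exp\big(-c\min\{r^2/\Vert B_u^\top B_u\Vert_F^2,\ r/\Vert B_u^\top B_u\Vert\}\big)$, scaled by $K^4$ from the $\psi_2$-norm bound on the coordinates of $\xi$. The operator and Frobenius norms of $B_u^\top B_u$ are controlled by $\Vert M\Vert^2\Vert\Gamma\Vert^2$: indeed $\Vert B_u\Vert \le \Vert\Gamma\Vert\,\Vert M\Vert$, and since $B_u$ has rank at most $d$ (its image lives in a $d$-dimensional space after multiplying by $Mu$... more precisely the relevant matrix acts nontrivially only on a $d$-dimensional block structure), $\Vert B_u^\top B_u\Vert_F^2 \le d\,\Vert\Gamma\Vert^4\Vert M\Vert^4$ while $\Vert B_u^\top B_u\Vert \le \Vert\Gamma\Vert^2\Vert M\Vert^2$. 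Plugging $r = \max(\varepsilon,\varepsilon^2)$ gives a pointwise bound of order $\exp\big(-c_1\varepsilon^2/(\Vert M\Vert^2\Vert\Gamma\Vert^2)\big)$ once one checks the $\min$ is governed by the quadratic (resp. linear) branch in the regime $\varepsilon\le 1$ (resp. $\varepsilon>1$), which is why the statement carries $\max(\varepsilon,\varepsilon^2)$.

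Finally I would upgrade the pointwise bound to a uniform bound over $S^{d-1}$. Since $\Vert(XM)^\top XM - I_d\Vert = \sup_{u\in S^{d-1}} |u^\top((XM)^\top XM - I_d)u|$ for the symmetric matrix in question, a standard argument (e.g. \cite[Ch.~4]{vershynin_2018}) lets one replace the sphere by a $\tfrac14$-net $\mathcal{N}$ of cardinality at most $9^d$, at the cost of a factor $2$ in the norm. A union bound over $\mathcal{N}$ then multiplies the pointwise failure probability by $9^d = \exp(d\log 9)$, producing the $+c_2 d$ term in the exponent and the overall bound $2\exp\big(-c_1\varepsilon^2/(\Vert M\Vert^2\Vert\Gamma\Vert^2) + c_2 d\big)$. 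I expect the main obstacle to be the bookkeeping in the second step: carefully identifying the matrix $B_u$ (equivalently, writing $XM$ as a linear image of $\xi$ and isolating its dependence on $u$), verifying that the expectation is exactly $I_d$ by the defining property of $M$, and getting clean bounds on $\Vert B_u^\top B_u\Vert$ and $\Vert B_u^\top B_u\Vert_F$ purely in terms of $\Vert M\Vert$ and $\Vert\Gamma\Vert$ — in particular making sure no hidden factor of $t$ creeps in, which is precisely where the uniform bound $\Vert\Gamma\Vert\le\mathcal{J}(A)$ from \eqref{eq:toeplitz} does the work. The net argument and the Hanson–Wright application themselves are routine once the chaos representation is set up correctly.
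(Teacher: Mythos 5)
Your overall strategy is exactly the paper's: write $\mathrm{vec}(X^\top)=\Gamma\xi$, represent $u^\top\bigl((XM)^\top XM - I_d\bigr)u$ as a centered chaos $\xi^\top B_u^\top B_u\,\xi - \mathbb{E}[\cdot]$ with $B_u=\sigma_{Mu}^\top\Gamma$, apply Hanson--Wright pointwise, and finish with a $\tfrac14$-net over $S^{d-1}$ contributing the $9^d=e^{d\log 9}$ factor. The first and third steps are fine. The problem is in the middle step, precisely where you anticipated trouble: your bound $\Vert B_u^\top B_u\Vert_F^2\le d\,\Vert\Gamma\Vert^4\Vert M\Vert^4$ is incorrect and, even after correction, too weak to give the stated tail. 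First, the rank of $B_u=\sigma_{Mu}^\top\Gamma$ is at most $t$, not $d$: $\sigma_{Mu}$ is a $td\times t$ matrix, so $B_u$ is $t\times td$ and $B_u^\top B_u$ has rank at most $t$. A rank-based Frobenius bound therefore reads $\Vert B_u^\top B_u\Vert_F^2\le t\,\Vert\Gamma\Vert^4\Vert M\Vert^4$, and plugging this into the quadratic branch of Hanson--Wright yields an exponent of order $\varepsilon^4/(t\Vert M\Vert^4\Vert\Gamma\Vert^4)$ for small $\varepsilon$, not the claimed $\varepsilon^2/(\Vert M\Vert^2\Vert\Gamma\Vert^2)$ --- so the factor of $t$ you were worried about does creep in, and the powers of $\varepsilon$ and of $\Vert M\Vert\,\Vert\Gamma\Vert$ come out wrong.

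The fix is to use the normalization you already established but then did not exploit: since $\mathbb{E}\Vert XMu\Vert_2^2=1$, you have $\tr(B_u^\top B_u)=\Vert B_u\Vert_F^2=1$, and for a positive semidefinite matrix $W$ one has $\Vert W\Vert_F^2\le\Vert W\Vert\,\tr(W)$. Hence $\Vert B_u^\top B_u\Vert_F^2\le\Vert B_u\Vert^2\cdot 1\le\Vert M\Vert^2\Vert\Gamma\Vert^2$, while $\Vert B_u^\top B_u\Vert=\Vert B_u\Vert^2\le\Vert M\Vert^2\Vert\Gamma\Vert^2$ as you had. With $\rho=\max(\varepsilon,\varepsilon^2)K^2$, so that $\varepsilon^2=\min(\rho^2/K^4,\rho/K^2)$, both branches of the minimum then give exactly $c\,\varepsilon^2/(\Vert M\Vert^2\Vert\Gamma\Vert^2)$, and the net argument supplies the $+c_2 d$. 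Equivalently (and this is what the paper does), apply Hanson--Wright in its ``anisotropic vector'' form directly to $\bigl|\Vert B_u\xi\Vert_2^2-\Vert B_u\Vert_F^2\bigr|$, where the controlling ratio is $\Vert B_u\Vert_F^2/\Vert B_u\Vert^2=1/\Vert B_u\Vert^2$; this packages the trace inequality for you and avoids ever forming $\Vert B_u^\top B_u\Vert_F$.
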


\subsection{Chaos Processes and Hanson-Wright Inequality}

In this section, we introduce chaos processes, and provide Hanson-Wright inequality, an instrumental concentration result related to these processes.
A \emph{chaos} in probability theory refers to a quadratic form $ \xi^\top W \xi$, where $W$ is a deterministic matrix in $\mathbb{R}^{d\times d}$ and $\xi$ is a random vector with independent coordinates. If the vector $\xi$ is isotropic, we simply have $\E \xi^\top W \xi = \textrm{tr}W$. The process $( \xi^\top W \xi)_{W\in {\cal W}}$ indexed by a set ${\cal W}$ of matrices is referred to as a {\it chaos process}.

In our analysis, we use the following concentration result on chaos, due to Hanson-Wright \cite{hanson1971bound,wright1973bound}, see \cite{rudelson2013hanson} (Theorems 2.1).

\begin{theorem}[Concentration of anisotropic random  vectors]\label{thm:ARV}
Let $B \in \mathbb{R}^{m \times d}$, and $\xi \in \mathbb{R}^d$ be a random vector with zero-mean, unit-variance, sub-gaussian independent coordinates. Then for all $\varepsilon > 0$,
\begin{align*}
  \mathbb{P}&\left[\left \vert \Vert B\xi \Vert_2^2 - \Vert B \Vert_F^2  \right\vert \right. \left.  > \varepsilon \Vert B  \Vert_F^2   \right]   \\ &  \qquad  \qquad  \le 2 \exp \left(- c \min\left(\frac{\varepsilon^2}{K^4}, \frac{\varepsilon}{K^2}\right) \frac{\Vert B \Vert_F^2}{\Vert B \Vert^2} \right),
\end{align*}
where $c$ is an absolute positive constant and $K = \Vert \xi \Vert_{\psi_2}$.
\end{theorem}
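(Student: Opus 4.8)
The final statement is the Hanson--Wright inequality in its ``anisotropic concentration'' form, and since it is quoted from \cite{rudelson2013hanson}, the natural plan is to derive it in two lines from the quadratic-form version of that inequality rather than to reprove it from scratch. Recall the quadratic-form Hanson--Wright bound: for a random vector $\xi \in \mathbb{R}^{d}$ with independent, mean-zero, sub-gaussian coordinates satisfying $\max_i \Vert \xi_i \Vert_{\psi_2} \le K$, and any $W \in \mathbb{R}^{d\times d}$,
$$
\mathbb{P}\big[\,|\xi^\top W \xi - \mathbb{E}\,\xi^\top W\xi| > t\,\big] \le 2\exp\Big(-c\min\Big(\tfrac{t^2}{K^4\Vert W\Vert_F^2},\ \tfrac{t}{K^2\Vert W\Vert}\Big)\Big).
$$

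First I would apply this with $W = B^\top B$. Then $\xi^\top W \xi = \Vert B\xi\Vert_2^2$, and because the coordinates of $\xi$ are independent with unit variance we have $\mathbb{E}\,\xi\xi^\top = I_d$, hence $\mathbb{E}\,\xi^\top W\xi = \tr(B^\top B) = \Vert B\Vert_F^2$; so with $t = \varepsilon \Vert B\Vert_F^2$ the deviation event is exactly the one in the statement. It then remains only to simplify the exponent, which is where the passage from $W = B^\top B$ back to $B$ enters: from the singular value decomposition of $B$ one has $\Vert B^\top B\Vert = \Vert B\Vert^2$ and $\Vert B^\top B\Vert_F \le \Vert B\Vert \, \Vert B\Vert_F$ (the latter since $\sum_i s_i(B)^4 \le s_1(B)^2\sum_i s_i(B)^2$). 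Substituting $t = \varepsilon\Vert B\Vert_F^2$ and these two bounds gives $\tfrac{t^2}{K^4\Vert W\Vert_F^2} \ge \tfrac{\varepsilon^2}{K^4}\tfrac{\Vert B\Vert_F^2}{\Vert B\Vert^2}$ and $\tfrac{t}{K^2\Vert W\Vert} = \tfrac{\varepsilon}{K^2}\tfrac{\Vert B\Vert_F^2}{\Vert B\Vert^2}$, so the minimum is at least $\min(\varepsilon^2/K^4, \varepsilon/K^2)\,\Vert B\Vert_F^2/\Vert B\Vert^2$, which is precisely the claimed tail (the constant $c$ changes only through the universal factor relating $\max_i\Vert\xi_i\Vert_{\psi_2}$ to $\Vert\xi\Vert_{\psi_2}$).

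Since all the nontrivial content sits in the cited quadratic-form inequality, the ``main obstacle'' only materializes if one wants a self-contained proof; in that case the standard route is (i) a decoupling step replacing the off-diagonal chaos $\sum_{i\neq j} W_{ij}\xi_i\xi_j$ by $\sum_{i,j} W_{ij}\xi_i\xi_j'$ with $\xi'$ an independent copy of $\xi$; (ii) conditioning on $\xi'$ to reduce to a linear form in $\xi$, controlling its moment generating function via sub-gaussianity and then optimizing over the free exponential parameter (this is what produces the two-regime sub-gaussian / sub-exponential tail); and (iii) a Bernstein estimate for the diagonal part $\sum_i W_{ii}(\xi_i^2 - \mathbb{E}\xi_i^2)$, a sum of independent centered sub-exponential variables. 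Combining (ii) and (iii) yields the quadratic-form bound, after which the two-line reduction above delivers Theorem~\ref{thm:ARV}. For the present paper, however, it is enough to invoke the inequality as stated.
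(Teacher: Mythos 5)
Your proposal is correct, and it matches the paper's treatment: the paper gives no proof of this theorem at all, simply citing it as Theorem 2.1 of Rudelson--Vershynin, and your two-line reduction from the quadratic-form Hanson--Wright inequality (taking $W=B^\top B$, using $\Vert B^\top B\Vert=\Vert B\Vert^2$ and $\Vert B^\top B\Vert_F\le\Vert B\Vert\,\Vert B\Vert_F$, and noting $\max_i\Vert\xi_i\Vert_{\psi_2}\le\Vert\xi\Vert_{\psi_2}$ so the bound only weakens when stated with $K=\Vert\xi\Vert_{\psi_2}$) is exactly the standard derivation in that reference. Nothing is missing.
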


\subsection{The $\epsilon$-net argument}\label{subsec:eps}

The epsilon net argument is a simple, yet powerful tool in the non-asymptotic theory of random matrices and high dimensional probability \cite{litvak2005smallest, vershynin_2018, tao2012topics}. Here, we provide two instances of this argument.
\begin{lemma}\label{lem:net argument 1}
  Let $W$ be an $m \times d$ random matrix, and $\epsilon \in [0, 1)$. Furthermore, let $\mathcal{N}$ be an $\epsilon$-net of $S^{d-1}$ with minimal cardinality. Then for all $\rho > 0$, we have
  \begin{align*}
      \mathbb{P}\left( \Vert W \Vert > \rho \right) & \le \left( \frac{2}{\epsilon} + 1\right)^d \max_{x \in \mathcal{N}} \mathbb{P}\left( \Vert W x \Vert_2 > (1-\epsilon)\rho \right).
  \end{align*}
\end{lemma}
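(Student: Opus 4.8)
The plan is to use a standard covering argument. First I would recall the basic fact (e.g. from \cite{vershynin_2018}) that for $\epsilon \in [0,1)$ there exists an $\epsilon$-net $\mathcal{N}$ of the unit sphere $S^{d-1}$ with $|\mathcal{N}| \le (2/\epsilon + 1)^d$, and that for any fixed matrix $W$ one has the deterministic bound
\begin{equation*}
\Vert W \Vert \le \frac{1}{1-\epsilon} \max_{x \in \mathcal{N}} \Vert W x \Vert_2.
\end{equation*}
I would include a one-line reminder of why this holds: writing $\Vert W\Vert = \sup_{y \in S^{d-1}}\Vert Wy\Vert_2$ and picking a near-maximizer $y^\star$, choose $x \in \mathcal{N}$ with $\Vert y^\star - x\Vert_2 \le \epsilon$; then $\Vert Wy^\star\Vert_2 \le \Vert Wx\Vert_2 + \Vert W(y^\star-x)\Vert_2 \le \Vert Wx\Vert_2 + \epsilon \Vert W\Vert$, and rearranging gives the claim. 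Crucially this inequality holds pointwise (for every realization of the random matrix $W$), so we may condition on the randomness freely.

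Next I would convert this deterministic bound into the probabilistic statement. On the event $\{\Vert W\Vert > \rho\}$, the deterministic inequality forces $\max_{x\in\mathcal{N}}\Vert Wx\Vert_2 \ge (1-\epsilon)\Vert W\Vert > (1-\epsilon)\rho$, hence there exists some $x \in \mathcal{N}$ with $\Vert Wx\Vert_2 > (1-\epsilon)\rho$. Therefore
\begin{equation*}
\{\Vert W\Vert > \rho\} \subseteq \bigcup_{x\in\mathcal{N}} \{\Vert Wx\Vert_2 > (1-\epsilon)\rho\}.
\end{equation*}
Taking probabilities and applying a union bound over the (at most $(2/\epsilon+1)^d$ many) points of $\mathcal{N}$ yields
\begin{equation*}
\mathbb{P}(\Vert W\Vert > \rho) \le \sum_{x\in\mathcal{N}} \mathbb{P}(\Vert Wx\Vert_2 > (1-\epsilon)\rho) \le |\mathcal{N}|\, \max_{x\in\mathcal{N}}\mathbb{P}(\Vert Wx\Vert_2 > (1-\epsilon)\rho),
\end{equation*}
and bounding $|\mathcal{N}| \le (2/\epsilon+1)^d$ gives exactly the stated inequality.

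There is no real obstacle here; the lemma is a textbook $\epsilon$-net reduction and the only things to be careful about are (i) that the covering-number bound $(2/\epsilon+1)^d$ is the one that comes with a \emph{minimal}-cardinality net (or, equivalently, the volumetric estimate for a maximal $\epsilon$-separated set), and (ii) that the deterministic operator-norm estimate is applied before taking probabilities, so that no independence or measurability subtlety arises. If one wanted the cleaner constant $1/(1-\epsilon)$ made explicit one could state it, but since the claim only asks for the bound with $(1-\epsilon)\rho$ on the right, the argument above suffices as written.
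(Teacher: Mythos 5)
Your argument is correct and is exactly the standard covering argument the paper has in mind: the paper itself gives no proof of this lemma and simply refers the reader to \cite[Chapter 4]{vershynin_2018}, where the deterministic bound $\Vert W\Vert \le (1-\epsilon)^{-1}\max_{x\in\mathcal{N}}\Vert Wx\Vert_2$ and the covering-number estimate $|\mathcal{N}|\le(2/\epsilon+1)^d$ are established and combined with a union bound just as you do. Nothing further is needed (at $\epsilon=0$ the stated bound is vacuous anyway, so the restriction to $\epsilon\in(0,1)$ in your deterministic step costs nothing).
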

\medskip 
\begin{lemma}\label{lem:net argument 2}
  Let $W$ be an $d \times d$ a symmetric random matrix, and $\epsilon \in [0, 1/2)$. Furthermore, let $\mathcal{N}$ be an $\epsilon$-net of $S^{d-1}$ with minimal cardinality. Then for all $\rho > 0$, we have
  \begin{align*}
      \mathbb{P}\left( \Vert W \Vert > \rho \right) & \le \left( \frac{2}{\epsilon} + 1\right)^d \max_{x \in \mathcal{N}} \mathbb{P}\left( \vert x^\top  W x \vert > (1-2 \epsilon)\rho \right).
  \end{align*}
\end{lemma}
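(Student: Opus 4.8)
The plan is to reduce the operator norm, which is a supremum over the entire sphere $S^{d-1}$, to a maximum over the finite net $\mathcal{N}$, via a purely deterministic inequality relating $\Vert W \Vert$ to $\max_{y \in \mathcal{N}} \vert y^\top W y \vert$; the probabilistic conclusion then follows by a union bound together with the standard cardinality estimate for the net.

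First I would record the variational characterization that makes symmetry essential here: since $W$ is symmetric, its operator norm equals its largest eigenvalue in absolute value, so $\Vert W \Vert = \sup_{x \in S^{d-1}} \vert x^\top W x \vert$. By compactness of $S^{d-1}$ and continuity of $x \mapsto x^\top W x$, this supremum is attained at some $x^\star \in S^{d-1}$ with $\vert {x^\star}^\top W x^\star \vert = \Vert W \Vert$. This is precisely the input that distinguishes the present lemma from the non-symmetric Lemma \ref{lem:net argument 1}, which instead rests on $\Vert W \Vert = \sup_x \Vert W x \Vert$.

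The core step is the net approximation. Choosing $y \in \mathcal{N}$ with $\Vert x^\star - y \Vert \le \epsilon$, I would expand the difference of quadratic forms through the bilinear identity
$$
x^\top W x - y^\top W y = x^\top W (x - y) + (x - y)^\top W y,
$$
and bound each summand by $\Vert W \Vert$ times the relevant norms, using $\Vert x^\star \Vert = \Vert y \Vert = 1$ and $\Vert x^\star - y \Vert \le \epsilon$. This gives $\vert {x^\star}^\top W x^\star - y^\top W y \vert \le 2 \epsilon \Vert W \Vert$, hence $\vert y^\top W y \vert \ge (1 - 2\epsilon)\Vert W \Vert$, so that $\max_{y \in \mathcal{N}} \vert y^\top W y \vert \ge (1 - 2\epsilon)\Vert W \Vert$. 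The hypothesis $\epsilon < 1/2$ is exactly what keeps $1 - 2\epsilon > 0$, and the factor $2$ (versus the single $\epsilon$ in Lemma \ref{lem:net argument 1}) is the signature of the two-term expansion above.

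To pass to probabilities, the deterministic inequality yields the event inclusion $\{\Vert W \Vert > \rho\} \subseteq \{ \max_{y \in \mathcal{N}} \vert y^\top W y \vert > (1 - 2\epsilon)\rho \}$, since $\Vert W \Vert > \rho$ forces $\max_{y} \vert y^\top W y \vert \ge (1-2\epsilon)\Vert W \Vert > (1-2\epsilon)\rho$. A union bound over $\mathcal{N}$ then bounds the latter probability by $\vert \mathcal{N} \vert \max_{y \in \mathcal{N}} \mathbb{P}(\vert y^\top W y \vert > (1-2\epsilon)\rho)$, and the classical volumetric covering estimate for a minimal $\epsilon$-net of the sphere, $\vert \mathcal{N} \vert \le (2/\epsilon + 1)^d$, delivers the stated constant. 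I expect no genuine obstacle: the only care needed in writing is to keep the directions of the inequalities consistent across the event inclusion and to invoke the cardinality bound with the correct constant, both of which are routine once the deterministic approximation step is in hand.
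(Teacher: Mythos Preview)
Your proposal is correct and follows the standard argument that the paper simply cites to \cite[Chapter 4]{vershynin_2018} rather than writing out: the variational form $\Vert W\Vert=\sup_{x\in S^{d-1}}|x^\top W x|$ for symmetric $W$, a two-term bilinear expansion giving the $(1-2\epsilon)$ factor, then a union bound and the volumetric covering estimate $|\mathcal N|\le(2/\epsilon+1)^d$. There is nothing to add or correct.
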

\medskip
\noindent
Lemma \ref{lem:net argument 1} and Lemma \ref{lem:net argument 2} exploit variational forms of the operator norm, namely $
\Vert W \Vert = \sup_{x \in S^{d-1}} \Vert W x \Vert_2
$ and, when $W$ is symmetric, $ \Vert W \Vert = \sup_{x \in S^{d-1}} \vert x^\top W x \vert$. The proof of theses standard lemmas can be found for example in \cite[Chapter 4]{vershynin_2018}.


%

\subsection{Proof of Lemma \ref{lem2}}

\noindent
\textbf{Step 1} ($\Vert (XM)^\top XM - I_d \Vert$ as the supremum of a chaos process)\\
Note that we can write $\textrm{vec}(X^\top) = \Gamma \xi$.
It follows from the fact that $\xi$ is isotropic that
$$
\E X^\top X  = \E \sum_{s=1}^{t} x_s x_s^\top = \sum_{s=1}^{t} \sum_{k=0}^{s-1} A^k (A^k)^\top = \sum_{s=0}^{t-1} \Gamma_s(A).
$$
Noting that $\E X^\top X$ is a symmetric positive definite matrix, we may define the inverse of its positive definite symmetric square root matrix
$M= \left(\sum_{s=0}^{t-1} \Gamma_s(A)\right)^{-\frac{1}{2}}$. We think of the inverse of this matrix as a normalization of $X$. Now, we have:
\begin{align}
    \Vert  (XM&)^\top XM - I_d \Vert   \nonumber \\
    & =  \sup_{u\in S^{d-1}} \left \vert u^\top \left( (XM)^\top X M  -   I_d \right) u \right\vert    \label{eq:suprima1}  \\
    & = \sup_{u\in S^{d-1}} \left \vert u^\top (XM)^\top X M u -  \E u^\top (XM)^\top X M u \right\vert \nonumber \\
    & = \sup_{u\in S^{d-1}} \left \vert \Vert X M u \Vert_2^2 -   \E \Vert X M u \Vert_2^2 \right\vert. \nonumber \\
     & = \sup_{u\in S^{d-1}} \left \vert \Vert  {\sigma_{Mu}}^\top \Gamma \xi \Vert_2^2 - \E \Vert  {\sigma_{Mu}}^\top \Gamma \xi \Vert_2^2  \right\vert \nonumber \\
   & = \sup_{u\in S^{d-1}} \left \vert \Vert  {\sigma_{Mu}}^\top \Gamma \xi \Vert_2^2 - \Vert  {\sigma_{Mu}}^\top \Gamma \Vert_F^2  \right\vert. \label{eq:suprima2}
\end{align}
where in the first equality, we used the variational form of the operator norm for symmetric matrices,  where in fourth equality, the $td\times t$ matrix $\sigma_{Mu}$ is defined as
$$
\sigma_{Mu} =
\begin{bmatrix}
Mu &   &        & O \\
  & Mu &        &  \\
  &    & \ddots &  \\
 O &   &        & Mu
\end{bmatrix},
$$
and the last equality follows from the fact that $\xi$ is isotropic. In fact, by definition of $M$, we have $\Vert  {\sigma_{Mu}}^\top \Gamma \Vert_F^2 = 1$ for all $u \in S^{d-1}$. We have proved that $\Vert (XM)^\top XM - I_d \Vert$ is the supremum of a \emph{chaos} process $(\xi^\top  W \xi)_W$, where the parametrizing matrix $W$ is
$$
W =  \Gamma^\top \sigma_{Mu}{\sigma_{Mu}}^\top \Gamma.
$$

\medskip
\noindent
\textbf{Step 2} (Uniform bound on the chaos) Let $\rho > 0$, and $u \in \mathcal{S}^{d-1}$. Again, recalling that ${\xi}$ is a zero-mean, subgaussian random vector with independent coordinates, from Hanson-Wright inequality (see Theorem \ref{thm:ARV}), we deduce that:
$$
\left \vert \Vert \sigma_{Mu}^\top \Gamma \xi \Vert_2^2 - \Vert \sigma_{Mu}^\top \Gamma \Vert_F^2  \right\vert > \rho \Vert  {\sigma_{Mu}}^\top \Gamma \Vert_F^2
$$
holds with probability at most
$$
2 \exp \left(- c \min\left(\frac{\rho^2}{K^4}, \frac{\rho}{K^2}\right) \frac{\Vert  {\sigma_{Mu}}^\top \Gamma \Vert_F^2}{ \Vert  {\sigma_{Mu}}^\top \Gamma \Vert^2 } \right)
$$
for some positive universal constant $c$. Noting that $\Vert  {\sigma_{Mu}}^\top \Gamma \Vert_F^2 = 1$, and $\Vert  {\sigma_{Mu}}^\top \Gamma \Vert \le \Vert M \Vert \Vert \Gamma \Vert$, an upper bound on the above probability is
$$
2 \exp \left(- c \min\left(\frac{\rho^2}{K^4}, \frac{\rho}{K^2}\right) \frac{1}{ \Vert M \Vert^2 \Vert \Gamma \Vert^2 } \right).
$$
\medskip
\noindent
\textbf{Step 3} ($\epsilon$-net argument) Recalling the equalities \eqref{eq:suprima1} and \eqref{eq:suprima2} in step 1, as a consequence of step 2, we have: for all $\epsilon\in [0, 1/2)$, for all $u \in \mathcal{S}^{d-1}$, the following
\begin{align*}
 \left \vert u^\top \left((XM)^\top X M - I_d\right) u \right\vert > (1-2\epsilon)\rho
\end{align*}
holds with probability at most
$$
2 \exp \left(- c \min\left(\frac{(1-2\epsilon)^2\rho^2}{K^4}, \frac{(1-2\epsilon)\rho}{K^2}\right) \frac{1}{ \Vert M \Vert^2 \Vert \Gamma \Vert^2 } \right).
$$
Now choosing $\epsilon = \frac{1}{4}$ and applying Lemma \ref{lem:net argument 2}, we obtain that
$$
\Vert (XM)^\top XM - I_d \Vert > \rho
$$
holds with probability at most
$$
2 \cdot 9^d\exp \left(- c \min\left(\frac{\rho^2}{K^4}, \frac{\rho}{K^2}\right) \frac{1}{ 4 \Vert M \Vert^2 \Vert \Gamma \Vert^2 } \right),
$$
where we used $\min\left(\frac{\rho^2}{4K^4}, \frac{\rho}{2K^2}\right) \ge \frac{1}{4}\min\left(\frac{\rho^2}{K^4}, \frac{\rho}{K^2}\right)$.  By choosing $\rho = \max(\varepsilon, \varepsilon^2)K^2$, which is equivalent to $\varepsilon^2 = \min(\frac{\rho}{K^2}, \frac{\rho^2}{K^4})$, we obtain that
$$
\Vert (XM)^\top XM - I_d \Vert > \max(\varepsilon, \varepsilon^2)K^2
$$
holds with probability at most
$$
2 \exp \left(- c_1 \varepsilon^2 \frac{1}{ \Vert M \Vert^2 \Vert \Gamma \Vert^2 } + c_2 d\right),
$$
for some positive absolute constants $c_1, c_2$. This completes the proof of Lemma \ref{lem2}.

\section{Proof of Theorem \ref{th1}}

We first decompose the estimation error as:
$$
\Vert A_{t} - A \Vert \le \Vert E^\top X ((X^\top X)^\dagger)^\frac{1}{2} \Vert \Vert ((X^\top X)^\dagger)^\frac{1}{2} \Vert.
$$
We introduce the matrix $M =  \left(  \sum_{s=1}^{t-1} \Gamma_s(A) \right)^{-\frac{1}{2}}$, and the events ${\cal E}_1$ and $\mathcal{E}_2$ defined as:
\begin{align*}
  \mathcal{E}_1 &= \left\lbrace \Vert E^\top X ((X^\top X)^\dagger)^\frac{1}{2} \Vert \Vert ((X^\top X)^\dagger)^\frac{1}{2} \Vert > \varepsilon \right \rbrace \\
  \mathcal{E}_2 &= \left\lbrace  \Vert (XM)^\top XM - I_d \Vert \le {1\over 2} \right\rbrace.
\end{align*}
Observe that:
\begin{align*}
    \mathbb{P} & \left( \Vert A_{t} - A \Vert > \varepsilon \right) \le \mathbb{P}\left( \mathcal{E}_1 \cap \mathcal{E}_2 \right) + \mathbb{P}\left( \overline{\mathcal{E}_2}\right).
\end{align*}

\subsection{Upper bound of $\mathbb{P}\left( \overline{\mathcal{E}_2}\right)$}
We use Lemma \ref{lem2}. Let $\rho = \max(\varepsilon^2, \varepsilon)K^2$, which is equivalent to writing $\varepsilon^2 = \min\left(\frac{\rho}{K^2}, \frac{\rho^2}{K^4}\right)$. Then choose $\rho = {1 \over 2}$. With this choice, Lemma \ref{lem2} implies that
\begin{equation*}
  \Vert (XM)^\top XM - I_d \Vert \le \frac{1}{2}
\end{equation*}
holds with probability at most
$$
2 \exp \left(- c_1 \min\left(\frac{1}{2K^2}, \frac{1}{4K^4}\right) \frac{1}{\Vert M \Vert^2 \Vert \Gamma \Vert^2 } + c_2 d\right).
$$
Since $\xi$ is sub-gaussian and isotropic, we have $K \ge 1$. We conclude that $\mathbb{P}\left( \overline{\mathcal{E}} \right) \le \frac{\delta}{2}$ when
\begin{equation}\label{condition1}
    \frac{1}{\Vert M \Vert^2} \ge \frac{16K^4 \Vert \Gamma \Vert^2}{c} \left(\log\left(\frac{4}{\delta}\right) + d \log(9) \right).
\end{equation}

\subsection{Upper bound of $\mathbb{P}\left( \mathcal{E}_1 \cap \mathcal{E}_2 \right)$}

We derive an upper bound on the above probability using a similar technique as in \cite{sarkar2019near} (see also \cite{gonzlez2019finitesample} where similar decompositions are used for the analysis of autoregressive processes)�. We first use the event ${\cal E}_2$ to simplify the condition $\left\lbrace \Vert E^\top X ((X^\top X)^\dagger)^\frac{1}{2} \Vert \Vert ((X^\top X)^\dagger)^\frac{1}{2} \Vert > \varepsilon \right \rbrace$ until we get a quantity that can be analyzed using concentration results on self-normalized processes.

When the event $\mathcal{E}_2$ occurs, we have:
$$
 \frac{3}{2} I_d \succeq (XM)^\top XM  \succeq \frac{1}{2} I_d
$$
or equivalently
$$
 \frac{3}{2} \sum_{s=0}^{t-1} \Gamma_s(A) = \frac{3}{2} M^{-2} \succeq X^\top X \succeq \frac{1}{2} M^{-2} = \frac{1}{2} \sum_{s=0}^{t-1} \Gamma_s(A).
$$
Define $S = \frac{1}{2}M^{-2}$ and $\beta = \sqrt{s_d(S)}$.
Note that when event $\mathcal{E}_2$ occurs,
$$
s_d(X) \ge  \beta = \sqrt{\frac{1}{2}\lambda_{\min}\left( \sum_{s=0}^{t-1} \Gamma_s(A) \right)}.
$$
Thus $\Vert X^\dagger \Vert \le \frac{1}{\beta}$ and we obtain:
$$
\mathcal{E}_1 \cap \mathcal{E}_2 \subseteq \lbrace  \Vert E^\top X (X^\top X)^{-\frac{1}{2}} \Vert > \varepsilon \beta \rbrace \cap\mathcal{E}_2.
$$
Next , again when $\mathcal{E}_2$ occurs, we have $2 X^\top X \succeq X^\top X + S$, and thus
$2 (X^\top X + S)^{-1} \succeq (X^\top X)^{-1}$. We deduce that:
\begin{align*}
\Vert E^\top X (X^\top X)^{-\frac{1}{2}} \Vert \le \sqrt{2} \Vert E^\top X (X^\top X + S )^{-\frac{1}{2}} \Vert.
\end{align*}
Hence,
\begin{align*}
& \lbrace \Vert E^\top X (X^\top X)^{-\frac{1}{2}} \Vert > \varepsilon \rbrace \cap\mathcal{E}_2  \\ & \qquad \qquad \subseteq \left \lbrace \sqrt{2} \Vert E^\top X (X^\top X + S )^{-\frac{1}{2}} \Vert > \varepsilon \beta \right \rbrace \cap\mathcal{E}_2.
\end{align*}
Furthermore, note that under $\mathcal{E}_2$, we have  $3 S \succeq X^\top X \succeq S$, which implies that for all $\delta \in (0, 1)$,
$$
\varepsilon \beta \ge \varepsilon \beta \sqrt{ \frac{\log \left( \frac{2\cdot 5^d \left(\det(X^\top X + S) S^{-1}\right)^{\frac{1}{2}}}{ \delta}\right)}{\log \left( \frac{2\cdot5^d \left(\det(3S + S) S^{-1}\right)^{\frac{1}{2}}}{ \delta}\right)}}.
$$
Now consider the condition
\begin{align*}
  \varepsilon & \ge  \frac{4\sqrt{2}\sqrt{c}K}{\beta} \sqrt{\log \left( \frac{2 \cdot 5^d \left(\det(3S + S) S^{-1}\right)^{\frac{1}{2}}}{ \delta}\right)^{-1}} \\
  & = \frac{ 4\sqrt{2}\sqrt{c}K}{\beta} \sqrt{\left( \log \left( \frac{2}{ \delta}\right) + d \log(10)\right)^{-1}}.
\end{align*}
Since $\beta^2 = \frac{1}{2\Vert M \Vert^2}$, the above condition is equivalent to
\begin{equation}\label{condition2}
    \frac{1}{\Vert M \Vert^2} \ge \frac{16 c K^2}{\varepsilon^2}\left( \log\left(\frac{2}{\delta} \right) + d \log(10)\right)
\end{equation}
We deduce that, under the above condition,
\begin{align*}
  & \Big\lbrace \sqrt{2} \Vert  E^\top X  (X^\top X + S )^{-\frac{1}{2}}  \Vert > \varepsilon \beta  \Big \rbrace \cap\mathcal{E}_2 \\
  & \subseteq \Bigg \lbrace \Vert E^\top X (X^\top X + S )^{-\frac{1}{2}} \Vert \\
  & > 4\sqrt{c}K   \sqrt{  \log \left( \frac{2\cdot 5^d \left(\det(X^\top X + S) S^{-1}\right)^{\frac{1}{2}}}{ \delta}\right)} \Bigg \rbrace.
\end{align*}
We are now ready to apply the results presented in the appendix in Corollary \ref{cor1} for self-normalized processes, and conclude that the event
\begin{align*}
& \Bigg \lbrace \Vert E^\top X (X^\top X + S )^{-\frac{1}{2}} \Vert  \\
& > 4\sqrt{c}K   \sqrt{  \log \left( \frac{2\cdot 5^d \left(\det(X^\top X + S) S^{-1}\right)^{\frac{1}{2}}}{ \delta}\right)} \Bigg \rbrace
\end{align*}
occurs with probability at most $\delta/2$. This implies that:
$$
\mathbb{P}\left( \Big\lbrace \sqrt{2} \Vert E^\top X  (X^\top X + S )^{-\frac{1}{2}}  \Vert > \varepsilon \beta  \Big \rbrace \cap\mathcal{E}_2 \right) \le \frac{\delta}{2}
$$
as long as condition $\eqref{condition2}$ holds. Hence
$
\mathbb{P}\left( \mathcal{E}_1  \cap \mathcal{E}_2 \right) \le \frac{\delta}{2}
$
when (\ref{condition2}) holds.

\subsection{Concluding steps}
Introducing
\begin{align*}
  \tau_1 & = \frac{16K^4\Vert \Gamma \Vert^2}{c_2} \left(\log\left(\frac{4}{\delta}\right) + d \log(9) \right), \\
  \tau_2 & = \frac{16 c_1 K^2}{\varepsilon^2}\left( \log\left(\frac{2}{\delta} \right) + d \log(10)\right), \\
\end{align*}
we may re-write Condition \eqref{condition1} as $\frac{1}{\Vert M \Vert^2} \ge \tau_1$ and Condition \eqref{condition2} as $\frac{1}{\Vert M \Vert^2} \ge \tau_2$. Thus, we have
\begin{align*}
\mathbb{P}\left(  \Vert A_{t+1} - A \Vert > \varepsilon \right) & \le \mathbb{P}\left( \mathcal{E}_1 \cap \mathcal{E}_2\right) + \mathbb{P}\left( \overline{\mathcal{E}_2}\right) &  \le \frac{\delta}{2} + \frac{\delta}{2}
\end{align*}
provided that
$
\frac{1}{\Vert M\Vert^2} \ge \max(\tau_1, \tau_2).
$
Finally observe that:
\begin{align*}
  & \max(\tau_1, \tau_2)  \\ & \quad \le   16 K^2 \max  \left( \frac{c_1}{\varepsilon^2},\frac{K^2 }{c_2} \Vert \Gamma \Vert^2  \right) \left( \log\left(\frac{4}{\delta} \right)  + d \log(10)\right) \\
  & \quad  \le C \max\left(\frac{1}{\varepsilon^2}, \Vert \Gamma \Vert^2 \right)\left( \log\left(\frac{4}{\delta}  \right) + d \log(10) \right)
\end{align*}
where $C = 16 K^2 \max\left(c_1, \frac{K^2}{c_2}\right)$. We conclude that
$$
\mathbb{P}\left(  \Vert A_{t+1} - A \Vert > \varepsilon \right) \le \delta
$$
holds as long as
$$
 \frac{1}{\Vert M \Vert^2} \ge C  \max\left(\frac{1}{\varepsilon^2}, \Vert \Gamma \Vert^2 \right)\left( \log\left(\frac{4}{\delta} \right) + d \log(10) \right).
$$
This completes the proof of Theorem \ref{th1}.

\bibliography{references,references2}{}
\bibliographystyle{IEEEtran}

\appendix

\section{Proof of Lemma \ref{lem:approximate isometry}}

\begin{proof}
Observe that
  $$
  \Vert (XM)^\top XM - I_d \Vert = \sup_{u \in {S^{d-1}}} \left\vert \Vert XMu \Vert_2^2 - 1  \right\vert.
  $$
  Using the fact that $\vert z^2 - 1 \vert \ge \max(\vert z-1 \vert, \vert z-1 \vert^2)$ for all $z > 0$, we obtain for all $u \in S^{d-1}$
  \begin{align*}
    \max(\left\vert \Vert XMu \Vert_2 - 1  \right\vert, \left\vert \Vert XMu \Vert_2 - 1  \right\vert^2) \le \max(\varepsilon, \varepsilon^2).
  \end{align*}
By the monotonicity of $z \mapsto \max(z, z^2)$ for $z \ge  0$, we deduce that:
\begin{align*}
    \left \vert \Vert X M u \Vert_2 - 1 \right\vert \le \varepsilon,
  \end{align*}
  from which we conclude
  \begin{align*}
    1 - \varepsilon \le \Vert XM u \Vert_2 \le 1 + \varepsilon.
  \end{align*}
  It follows immediately that:
  \begin{align*}
    1-\varepsilon  \le s_d(XM) \le \dots \le s_1(XM)\le 1+\varepsilon.
  \end{align*}
  The proof is completed observing that:
  $$
  s_d(X M) \le s_d(X) s_1(M) \quad \textrm{and} \quad   s_1(X) s_d(M)\le s_1(XM).
  $$
\end{proof}

\section{Properties of $\Gamma$}

\begin{lemma}\label{lem4}
  Consider the block Toeplitz matrix $\Gamma$ as defined earlier, and assume that $\rho(A) < 1$. Then, there exists a positive constant $\mathcal{J}(A)$ that only depends on $A$, such that for all $t \ge 1$,
  $
   \Vert \Gamma \Vert \le \mathcal{J}(A).
  $
\end{lemma}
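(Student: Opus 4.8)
The plan is to exploit the block-Toeplitz (discrete convolution) structure of $\Gamma$ and reduce the operator-norm bound to the absolute summability of the sequence $(\|A^s\|)_{s\ge 0}$.

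First I would establish that $\mathcal{J}(A)=\sum_{s\ge 0}\|A^s\|<\infty$. This follows from $\rho(A)<1$ via the spectral radius (Gelfand) formula $\lim_{s\to\infty}\|A^s\|^{1/s}=\rho(A)$: fixing any $r$ with $\rho(A)<r<1$, there is an index $s_0$ with $\|A^s\|\le r^s$ for all $s\ge s_0$, whence $\sum_{s\ge 0}\|A^s\|\le \sum_{s<s_0}\|A^s\|+\sum_{s\ge s_0}r^s<\infty$. (When moreover $\|A\|<1$ one gets the cleaner bound $\mathcal{J}(A)\le\sum_{s\ge0}\|A\|^s=1/(1-\|A\|)$ directly from submultiplicativity of the operator norm.) In particular, every partial sum $\sum_{s=0}^{t-1}\|A^s\|$ is bounded by $\mathcal{J}(A)$, uniformly in $t$.

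Next I would write $\Gamma=\sum_{k=0}^{t-1} N_k\otimes A^k$, where $N_k\in\mathbb{R}^{t\times t}$ is the matrix with ones on its $k$-th sub-diagonal and zeros elsewhere (so $N_0=I_t$ and $N_k=N_1^k$ with $N_1$ the down-shift); this is merely a restatement of the block-by-block definition of $\Gamma$. Since $N_k$ is a partial permutation matrix, $\|N_k\|=1$, hence $\|N_k\otimes A^k\|=\|N_k\|\,\|A^k\|=\|A^k\|$, and the triangle inequality gives $\|\Gamma\|\le\sum_{k=0}^{t-1}\|A^k\|\le\mathcal{J}(A)$, which is the claim. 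An equivalent route, avoiding Kronecker products, is to test $\Gamma$ against a block vector $v=(v_1,\dots,v_t)$ with $v_j\in\mathbb{R}^d$: from $(\Gamma v)_i=\sum_{j=1}^i A^{i-j}v_j$ one gets $\|(\Gamma v)_i\|\le\sum_j a_{i-j}\|v_j\|$ with $a_k:=\|A^k\|$ for $k\ge0$ and $a_k:=0$ for $k<0$, and Young's convolution inequality then yields $\|\Gamma v\|\le\|a\|_{\ell_1}\,\|v\|\le\mathcal{J}(A)\,\|v\|$.

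There is no serious obstacle here; the only point requiring care is that the bound must be uniform in the horizon $t$, which is precisely why the finiteness of $\mathcal{J}(A)$ (rather than a crude $t$-dependent estimate) is the essential ingredient, and why the argument is set up so that $t$ enters only through the truncated partial sum $\sum_{k=0}^{t-1}\|A^k\|\le\mathcal{J}(A)$.
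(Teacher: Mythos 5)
Your proof is correct, and it reaches the same final bound $\Vert\Gamma\Vert\le\sum_{k=0}^{t-1}\Vert A^k\Vert\le\mathcal{J}(A)$ by a genuinely different and more elementary route. The paper embeds $\Gamma$ into an infinite banded block Toeplitz operator $\mathcal{T}(A)$ and invokes the symbol calculus from the theory of Toeplitz operators, namely $\Vert\mathcal{T}(A)\Vert=\sup_{x\in[0,1]}\bigl\Vert\sum_{s=0}^{t-1}A^se^{2\pi i sx}\bigr\Vert$, before applying the triangle inequality to the symbol. You instead write $\Gamma=\sum_{k=0}^{t-1}N_k\otimes A^k$ with $N_k$ the $k$-th subdiagonal shift, use $\Vert N_k\otimes A^k\Vert=\Vert N_k\Vert\,\Vert A^k\Vert=\Vert A^k\Vert$, and apply the triangle inequality directly (your Young's-inequality variant is an equivalent convolution phrasing of the same estimate). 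Your argument is self-contained and avoids the citation to the Toeplitz literature; the paper's symbol identity is in principle sharper as an intermediate step (the supremum of the symbol can be strictly smaller than $\sum_s\Vert A^s\Vert$), but since the paper immediately bounds the symbol by the triangle inequality anyway, that potential advantage is not exploited and the two proofs deliver identical conclusions. Your justification of the finiteness of $\mathcal{J}(A)$ via Gelfand's formula matches the paper's appeal to the asymptotics of $\Vert A^s\Vert$, and your explicit attention to uniformity in $t$ is sound.
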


\begin{proof}
  From the theory of Toeplitz matrices (see \cite{bottcher2012introduction}), we have
  $$
  \Vert \Gamma \Vert  \le \sup_{x \in [ 0, 1]} \left \Vert \sum_{s=0}^{t-1} A^s e^{2\pi i s x} \right \Vert.
  $$
  The argument behind the above inequality consists of noting that $\Gamma$ is a submatrix of the infinite banded block Toeplitz matrix $\mathcal{T}(A)$ defined as
  \begin{equation*}
    \mathcal{T}(A) = \begin{bmatrix}
      I_d     &          &         &         \\
      A       &  \ddots  &         &         \\
      \vdots  &  \ddots  & I_d     &         \\
      A_{t-1} &          & A       &  \ddots \\
              &  \ddots  & \vdots  &  \ddots \\
              &          & A_{t-1} &         \\
              &          &         &  \ddots
  \end{bmatrix},
  \end{equation*}
  thus $\Vert \Gamma \Vert \le \Vert \mathcal{T}(A)\Vert$. It follows immediately from the results in \cite[chapter 6]{bottcher2012introduction} that
  $$
  \Vert \mathcal{T}(A) \Vert = \sup_{x \in [ 0, 1]} \left \Vert \sum_{s=0}^{t-1} A^s e^{2\pi i s x} \right \Vert.
  $$
Next, note that:
  \begin{align*}
          \sup_{x \in [ 0, 1]}  \left \Vert \sum_{s=0}^{t-1} A^s e^{2\pi i s x} \right \Vert &  \le  \sup_{x \in [ 0, 1]}   \sum_{s=0}^{t-1}  \left \Vert  A^s e^{2\pi i s x}  \right \Vert \\
          & \le  \sum_{s=0}^{t-1} \Vert  A^s \Vert
  \end{align*}
  Classic results on convergence of Matrix power series ensure convergence of $\sum_{s=0}^{t-1} \Vert  A^s \Vert$, under the condition that $\rho(A) < 1$ (in particular, $\Vert A^s \Vert$ is asymptotically equivalent to $\rho(A)^s$, see for instance \cite{young1981therate}). Hence if $\mathcal{J}(A)=\sum_{s=0}^{\infty} \Vert  A^s \Vert$, we have
  $
    \Vert \Gamma \Vert \le \mathcal{J}(A).
  $
\end{proof}

\section{Concentration results for self-normalized processes}

Proposition \ref{prop:snp} can be found in   \cite{abbasi2011improved}. It is an application of the theory of self-normalized processes \cite{pena2008book}.
\begin{proposition}[Self-normalized concentration]\label{prop:snp}
Let $\lbrace \mathcal{F}_{t} \rbrace_{t\ge 1}$ be a filtration. Let $\lbrace \eta_t \rbrace_{t\ge 1}$ be stochastic process adapted to $\lbrace \mathcal{F}_{t} \rbrace_{t\ge 1}$ and taking values in $\mathbb{R}$. Let $\lbrace x_t \rbrace_{t\ge 1}$ be a predictable stochastic process with respect to $\lbrace \mathcal{F}_{t} \rbrace_{t\ge 1}$, taking values in $\mathbb{R}^d$. Furthermore, assume that $\eta_{t+1}$, conditionally on ${\cal F}_t$, is a sub-gaussian r.v. with $\psi_2$-norm equal to K, and let $c>0$ denote a constant such that for all $\lambda>0$
$$
\E [ \exp(\lambda \eta_{t+1}) \vert \mathcal{F}_{t} ]\le \exp (c\lambda^2K^2).
$$
Let $S$ be an $d\times d$ positive definite matrix. Using the notation $\eta^\top = \begin{bmatrix} \eta_2 & \dots & \eta_{t+1} \end{bmatrix}$ and $X^\top = \begin{bmatrix} x_1 & \dots & x_{t} \end{bmatrix}$, the following
\begin{align*}
& \left \Vert \left( X^\top X + S \right)^{-\frac{1}{2}} \left(X^\top \eta \right) \right\Vert_2^2 \\
& \qquad \qquad \le 4 c K^2 \log\left(  \frac{(\det \left( (X^\top X + S)S^{-1}\right))^{\frac{1}{2}}}{\delta} \right) .
\end{align*}
holds with probability at least $1 - \delta$.
\label{prop1}
\end{proposition}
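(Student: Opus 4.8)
The plan is to establish this via the \emph{method of mixtures} (pseudo-maximization for martingales), which is the standard route to self-normalized tail bounds (Abbasi-Yadkori--P\'al--Szepesv\'ari). First I would fix a deterministic $\lambda \in \mathbb{R}^d$ and introduce the exponential process
\[
M_s^\lambda \;=\; \exp\!\left( \sum_{r=1}^s \lambda^\top x_r\, \eta_{r+1} \;-\; c K^2 \sum_{r=1}^s (\lambda^\top x_r)^2 \right), \qquad s = 0,1,\dots,t .
\]
Relative to the shifted filtration $\mathcal{G}_s := \mathcal{F}_{s+1}$, the covariate $x_s$ is $\mathcal{F}_s$-measurable (it is predictable for $\{\mathcal{F}_s\}$), so conditioning on $\mathcal{G}_{s-1} = \mathcal{F}_s$ and applying the assumed conditional sub-gaussian moment bound with the \emph{scalar} $\lambda^\top x_s$ in the role of the free parameter gives $\E[M_s^\lambda \mid \mathcal{G}_{s-1}] \le M_{s-1}^\lambda$. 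Hence $(M_s^\lambda)_s$ is a nonnegative supermartingale with $M_0^\lambda = 1$, so $\E[M_t^\lambda] \le 1$.

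Next I would average over $\lambda$ against a Gaussian prior: take $\Lambda \sim \mathcal{N}\!\big(0,\,(2cK^2 S)^{-1}\big)$ independent of the trajectory, and set $\bar M_t := \E_\Lambda[M_t^\Lambda]$, the expectation being over $\Lambda$ only. Since $M_t^\lambda \ge 0$, Tonelli's theorem lets me swap the two expectations and conclude $\E[\bar M_t] = \E_\Lambda\big[\E[M_t^\Lambda]\big] \le 1$. The substantive step is then to evaluate $\bar M_t$ in closed form: writing $v = X^\top \eta$ and completing the square in the quadratic exponent $v^\top z - cK^2\, z^\top (X^\top X) z - \tfrac12 z^\top (2cK^2 S) z$, the Gaussian integral yields
\[
\bar M_t \;=\; \Big( \det\!\big( (X^\top X + S)\,S^{-1}\big) \Big)^{-1/2} \exp\!\left( \frac{1}{4cK^2}\, \big\Vert (X^\top X + S)^{-1/2} X^\top \eta \big\Vert_2^2 \right).
\]
Finally, Markov's inequality gives $\mathbb{P}(\bar M_t \ge 1/\delta) \le \delta\,\E[\bar M_t] \le \delta$, and on the complementary event $\{\bar M_t < 1/\delta\}$, taking logarithms and rearranging yields precisely the claimed inequality.

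I expect the main obstacle to be bookkeeping rather than conceptual: getting the time-indexing right so that $(M_s^\lambda)$ is genuinely a supermartingale for the shifted filtration (the mismatch between $x_s$ entering at time $s$ and $\eta_{s+1}$ at time $s+1$ must be tracked carefully), and, more importantly, calibrating the prior covariance to exactly $(2cK^2 S)^{-1}$ so that both the determinant ratio $\det\!\big((X^\top X + S)S^{-1}\big)$ and the multiplicative constant $4cK^2$ emerge as stated — any other normalization perturbs these factors. Everything else (Tonelli, the Gaussian integral, Markov) is routine. If a time-uniform version were wanted one would replace the last Markov step by Doob's maximal inequality / an optional-stopping argument applied to the supermartingale $(\bar M_t)$, but for the fixed-horizon bound above the one-point Markov inequality suffices.
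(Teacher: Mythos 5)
Your proof is correct and coincides with the argument the paper relies on: the paper does not prove Proposition~\ref{prop1} itself but cites \cite{abbasi2011improved}, whose proof is precisely this method-of-mixtures computation — the conditional sub-gaussian bound applied with the scalar $\lambda^\top x_s$ to get a nonnegative supermartingale, a Gaussian prior calibrated to $\mathcal{N}\big(0,(2cK^2S)^{-1}\big)$, the closed-form evaluation of the mixture, and Markov's inequality. Your normalization checks out (the $(2\pi)$, $(2cK^2)$ and $(\pi/cK^2)$ factors cancel exactly), so the constant $4cK^2$ and the determinant ratio $\det\big((X^\top X+S)S^{-1}\big)$ emerge as stated.
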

\noindent
The following result follows from a $\varepsilon$-net argument and can be found in \cite{sarkar2019near}.
\begin{corollary}\label{cor1}
Under the same assumptions as in Proposition \ref{prop1} with the exception that $\lbrace \eta_t \rbrace_{t\ge1}$ is taking values in $\mathbb{R}^d$ and that for all $\lambda > 0$, for all $u\in S^{d-1}$
$$
\E [ \exp(\lambda \eta_{t+1}^\top u ) \vert \mathcal{F}_{t} ]\le \exp (c\lambda^2K^2)
$$
for some absolute constant $c > 0$. That is to say that $\eta_{t+1}$ is, conditionally on $\mathcal{F}_t$, a subgaussian random vector with $\psi_2$-norm equal to $K$. Then for all $\delta \in (0, 1)$, the following
\begin{align*}
& \left \Vert  \left( X^\top X + S \right)^{-\frac{1}{2}}  \left(X^\top E \right) \right\Vert^2 \\
&\ \ \ \ \ \ \le 16cK^2 \left(\log\left(  \frac{5^d \left( \det \left( (X^\top X + S)S^{-1}\right)\right)^{\frac{1}{2}} }{\delta} \right)\right).
\end{align*}
holds with probability at least $1 - \delta$.
\end{corollary}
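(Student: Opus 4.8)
The statement asks us to pass from the \emph{scalar} self-normalized bound of Proposition~\ref{prop1} to an operator-norm bound on the $d\times d$ matrix $W:=\left(X^\top X + S\right)^{-\frac12}\!\left(X^\top E\right)$. The plan is the standard maneuver behind Lemma~\ref{lem:net argument 1}: control $\Vert Wu\Vert_2$ for each fixed direction $u$ by invoking Proposition~\ref{prop1} for the one-dimensional noise $\eta_{s+1}^\top u$, then take a union bound over an $\epsilon$-net of $S^{d-1}$, paying a factor $5^d$ in the confidence level.

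First I would fix $u\in S^{d-1}$ and note that $Wu = \left(X^\top X + S\right)^{-\frac12} X^\top \eta^{(u)}$, where $\eta^{(u)}\in\mathbb{R}^t$ has coordinates $\left(\eta^{(u)}\right)_s=\eta_{s+1}^\top u$. By the hypotheses of the corollary, the scalar process $\left(\eta_{s+1}^\top u\right)_{s\ge1}$ is adapted and satisfies $\E\!\left[\exp(\lambda\,\eta_{s+1}^\top u)\mid\mathcal{F}_s\right]\le\exp(c\lambda^2K^2)$ for all $\lambda>0$, uniformly in $u$, while $(x_s)_{s\ge1}$ is predictable; hence Proposition~\ref{prop1} applies verbatim to this scalar noise and yields, for any $\delta'\in(0,1)$, with probability at least $1-\delta'$,
$$
\Vert Wu\Vert_2^2 \le 4cK^2\log\!\left(\frac{\left(\det\!\left((X^\top X + S)S^{-1}\right)\right)^{\frac12}}{\delta'}\right).
$$
The key point is that the right-hand side depends on $u$ only through $\delta'$, because the matrix $X^\top X + S$ — and hence its determinant relative to $S$ — is common to every direction.

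Next I would take $\mathcal{N}$ to be a minimal $\tfrac12$-net of $S^{d-1}$, so $|\mathcal{N}|\le 5^d$, apply the previous display with $\delta'=\delta/5^d$, and union-bound over $u\in\mathcal{N}$: with probability at least $1-\delta$, simultaneously for all $u\in\mathcal{N}$,
$$
\Vert Wu\Vert_2^2 \le 4cK^2\log\!\left(\frac{5^d\left(\det\!\left((X^\top X + S)S^{-1}\right)\right)^{\frac12}}{\delta}\right).
$$
Combining this with the deterministic covering estimate $\Vert W\Vert\le\frac{1}{1-1/2}\max_{u\in\mathcal{N}}\Vert Wu\Vert_2 = 2\max_{u\in\mathcal{N}}\Vert Wu\Vert_2$ (the inequality underlying Lemma~\ref{lem:net argument 1}) and squaring gives exactly the claimed bound with constant $16cK^2$. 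The one step that deserves care is that the bound of Proposition~\ref{prop1} carries a \emph{data-dependent} threshold, so one must verify that union-bounding over $\mathcal{N}$ is legitimate; it is, since for each fixed $u$ the displayed event holds with probability at least $1-\delta'$ whatever the value of the random determinant, and that determinant is identical across directions, so intersecting the $|\mathcal{N}|$ events costs only the usual factor $|\mathcal{N}|$ in the failure probability. Everything else — the inheritance of conditional sub-gaussianity under projection onto $u$, the predictability of $(x_s)$, and the net estimate — is routine.
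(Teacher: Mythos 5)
Your proposal is correct and follows essentially the same route as the paper: apply Proposition~\ref{prop1} to the scalar projected noise $\eta_{s+1}^\top u$ for each $u$ in a minimal $\tfrac12$-net of cardinality at most $5^d$, union-bound with $\delta'=\delta/5^d$, and recover the operator norm via the covering inequality (the paper packages this as Lemma~\ref{lem:net argument 1}), which accounts for the factor $4$ turning $4cK^2$ into $16cK^2$. Your explicit remark that the data-dependent determinant is common to all directions, so the union bound is legitimate, is a point the paper leaves implicit.
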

\begin{proof}
Let $\delta_0 \in (0, 1)$ to be chosen later. Let $\mathcal{N}$ be a $1/2$-net of the unit sphere $S^{d-1}$ of minimal cardinality. Consider the events
\begin{align*}
  \mathcal{E}_{\delta_0} &= \Bigg \lbrace  \left\Vert  \left( X^\top X + S \right)^{-\frac{1}{2}} \left(X^\top E \right) \right\Vert^2  \\
  & \qquad  > 16c K^2  \log\left(  \frac{\left( \det \left( (X^\top X + S)S^{-1}\right) \right)^{-\frac{1}{2}} }{\delta_0} \right) \Bigg\rbrace, \\
  \mathcal{E}_{\delta_0, u} &= \Bigg \lbrace  \left\Vert  \left( X^\top X + S \right)^{-\frac{1}{2}} \left(X^\top E \right)u \right\Vert^2  \\
  & \qquad  > 4c K^2  \log\left(  \frac{\left( \det \left( (X^\top X + S)S^{-1}\right) \right)^{-\frac{1}{2}} }{\delta_0} \right) \Bigg\rbrace,
\end{align*}
for $u \in \mathcal{N}$. The net argument (see Lemma \ref{lem:net argument 1}) implies that
$$
\mathbb{P}(\mathcal{E}_{\delta_0}) \le   5^d  \max_{u \in \mathcal{N}} \mathbb{P}(\mathcal{E}_{\delta_0, u}).
$$
By Proposition \ref{prop1}, we obtain for all $u \in \mathcal{N}$,
$$
\mathbb{P}(\mathcal{E}_{\delta_0, u}) \le \delta_0.
$$
Hence,
$$
\mathbb{P}(\mathcal{E}_{\delta_0}) \le   5^d \delta_0.
$$
Finally, substituting with $\delta = 5^{d} \delta_0$, for all $\delta_0 \in (0, 5^{-d})$, yields the desired result.
\end{proof}

\end{document}